%% file: main.tex
\documentclass[]{amsart}

\input{preamble}

\usepackage[margin=1.5in]{geometry}
\usepackage{subcaption}

\title{Finite index rigidity of free-by-cyclic groups}
\author{Radhika Gupta}
\address{School of Mathematics, Tata Institute of Fundamental Research, Mumbai 40005, India}
\email{rgupta@math.tifr.res.in}

\author{Nir Lazarovich}
\address{Department of Mathematics,
	Technion,
	Haifa 32000,
	Israel}
 \email{lazarovich@technion.ac.il}
\date{}

\begin{document}

\begin{abstract} 
We show that a free-by-cyclic group with infinite order monodromy is finite index rigid, that is, any two isomorphic finite index subgroups have the same index.
\end{abstract}

\maketitle

\section{Introduction}

A group $G$ is called \emph{finite index rigid }(FIR) if any pair of isomorphic finite index subgroups of $G$ have the same index in $G$. Our main result is: 
\begin{theoremA}\label{main thm}
   Let $\bbF$ be a finite rank non-abelian free group and let $\phi\in \Out(\bbF)$.
   The free-by-cyclic group $\bbG \cong \bbF \rtimes_\phi \bbZ$ is finite index rigid if and only if $\phi$ has infinite order in $\Out(\bbF)$.
\end{theoremA}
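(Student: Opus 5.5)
The easy direction comes first. If $\phi$ has finite order $k$ in $\Out(\bbF)$, then $\bbG$ contains the finite index subgroup $\bbF\rtimes_{\phi^k}\bbZ$. Since $\phi^k$ is inner, this subgroup is isomorphic to $\bbF\times\bbZ$. The group $\bbF\times\bbZ$ is not FIR: the subgroups $\bbF\times n\bbZ$ are isomorphic to it for every $n$, but have index $n$. Pulling these back gives isomorphic finite index subgroups of $\bbG$ with different indices, so $\bbG$ fails FIR. This direction is routine.

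For the converse, assume $\phi$ has infinite order, and let $H,H'\le\bbG$ be finite index subgroups with an isomorphism $H\cong H'$. The plan is to compare an invariant that is multiplicative in the index and is preserved by isomorphism. The Euler characteristic is useless here, since $\chi(\bbG)=0$, and so is the $L^2$-Betti number. Instead I would use the \emph{growth rate / entropy data} together with the \emph{BNS invariant / Thurston-type norm}.

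First, every finite index subgroup $H$ is again free-by-cyclic: $H\cap\bbF$ is free of rank $r_H$ with $(r_H-1)=[\bbF:H\cap\bbF](r-1)$, and $H\cong (H\cap\bbF)\rtimes_{\psi}\bbZ$ with $\psi$ a root of a power of $\phi$. Second, by Dowdall–Kapovich–Leininger / Funke–Kielak, Thurston-type polytope theory applies: there is a norm $\|\cdot\|$ on $H^1(H;\bbR)$ (the Alexander/$L^2$-torsion polytope norm) that is an isomorphism invariant. It satisfies $\|\alpha\|=-\chi(\ker\alpha)$ for fibred classes $\alpha$ with finitely generated kernel. Under restriction to a finite index subgroup, the norm scales by the index. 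This is the analogue of Thurston norm multiplicativity, and it follows from the polytope being defined via $L^2$-torsion, which is multiplicative.

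The key step is to find a canonical class, or canonical normalization, that an abstract isomorphism must respect. An isomorphism $f\colon H\to H'$ induces an isometry of the norms. For the indices to be forced equal, we need a quantity like the norm of the restriction of a class pulled back from $\bbG$. Such a class is not canonical from $H$ alone, because $f^*$ need not carry restrictions of $\bbG$-classes to restrictions. So I would instead use a volume-type invariant: the \emph{universal $L^2$-torsion} or the covolume of the norm ball relative to the integer lattice $H^1(H;\bbZ)$. This quantity scales by a computable power of the index, provided $b_1$ stays bounded. That proviso fails in general, because $b_1$ can grow in covers.

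The main obstacle is exactly this: producing an isomorphism-invariant, index-multiplicative number that is nonzero precisely when $\phi$ has infinite order. My first attempt would be the \emph{minimal norm of a fibred class}, that is, the minimal rank of a free fibre, which is a canonical invariant of $H$. I would show that it scales linearly with the index, using that $\phi$ of infinite order forces the norm to be nondegenerate on the relevant cone. If the growth is exponential, use the stretch factor and entropy. If $\phi$ is polynomially growing, use the positive-degree polynomial growth and the JSJ/Bass–Serre structure from its graph-of-cyclic-groups decomposition. In each case, multiplicativity in covers combined with invariance gives $[\bbG:H]=[\bbG:H']$.
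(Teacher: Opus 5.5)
Your finite-order direction is correct and matches the paper. For the converse there is a genuine gap: you never produce an isomorphism invariant of $H$ that scales with $[\bbG:H]$, and the one concrete candidate you commit to fails.

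\textbf{Why the minimal fibre norm fails.} The minimal norm of a primitive fibred class (equivalently, the minimal rank of a free fibre) is not multiplicative. The subgroup $H=\langle \bbF,t^n\rangle\cong\bbF\rtimes_{\phi^n}\bbZ$ has index $n$, but it still fibres with fibre $\bbF$. So its minimal fibre rank is at most $\rank(\bbF)$ for every $n$, whereas linear scaling would force it to be at least $2n$. In norm language, $\alpha|_H=n\alpha'$ with $\alpha'$ primitive and $\|\alpha'\|=\|\alpha\|$. Only the non-primitive restricted class has norm $n\|\alpha\|$. Recognising that class requires knowing which classes come from $\bbG$, which is exactly the information you correctly noted an abstract isomorphism $H\to H'$ does not preserve. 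Nondegeneracy of the norm does not help with this.

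\textbf{The exponential case.} Your suggestion here has the same defect. The stretch factor depends on the chosen fibration. Even for a fixed fibration it does not see the index: for a characteristic finite index $H_0\le\bbF$, the subgroup $H_0\rtimes\langle t\rangle$ has the same stretch factor. The paper makes no uniform argument. For exponentially growing $\phi$ it cites Lazarovich--Rahamim--Sisto (finite index rigidity of relatively hyperbolic groups).

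\textbf{The polynomial case.} For $\deg(\phi)\ge1$ your last sentence only names ingredients, and the description is off. The relevant splitting is not a graph of cyclic groups: in degree $1$ the edge groups are maximal $\bbZ^2$'s and the vertex groups are $F\times\bbZ$. The missing ideas are these.
\begin{enumerate}
\item After reducing to UPG monodromy and normal subgroups, use the tree of cylinders $\bbT$. It is canonical in the strong sense that any isomorphism between finite index subgroups is realised by an equivariant isomorphism of $\bbT$.
\item In degree $d+1\ge2$, define the invariant inductively as the generalised Euler characteristic $\sum_{v\in\bbT/\bbG}\xi(G_v)$ of the lower-degree vertex groups. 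It is positive because some vertex group has degree $d\ge1$. It is multiplicative by the orbit-counting lemma.
\item Degree $1$ is the base case, and it is genuinely different. There the vertex groups $F\times\bbZ$ are not finite index rigid, so every multiplicative function on them vanishes and any sum of vertex invariants is identically zero. The paper instead runs the Wang--Wu/Yu--Wang graph-manifold argument. It weights the edges of $\bfT/H$ by $\alpha(e)=1/(\Delta(e)\bfx(v_1)\bfx(v_2))$, where $\Delta(e)$ is the intersection number of the two centres inside the $\bbZ^2$ edge group. From the covering identity $d\,u_e\alpha(\hat e)=\alpha(e)u_1u_2$ it derives $\bfX^H\bfB^{\bbG}=[\bbG:H]\,\bfH^H\bfW^H$, and concludes with Perron--Frobenius.
\end{enumerate}
Without something playing the role of the degree-$1$ argument, your approach has no base case for the induction.
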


Let $\phi \in \Out(\bbF)$. 
We say $\phi$ is \emph{polynomially growing} if there exists $d\in \{0\}\cup \bbN$  such that 
$\|\phi^n([g])\|\le O(n^d)$ for all $g\in \bbF$ (where $\|[g]\|$ denotes the minimum word length of elements of the conjugacy class $ [g]$ with respect to some fixed basis). The minimal such $d$ is the \emph{degree} $\deg(\phi)$ of $\phi$. We say that $\phi$ is \emph{exponentially growing} if there exists $g\in \bbF$ such that $\|\phi^n([g])\| \ge c^n$ for some $c>1$. By \cite{BestvinaHandel}, every $\phi\in \Out(\bbF)$ is either polynomially or exponentially growing.

If $\phi\in\Out(\bbF)$ has finite order (or equivalently if it is polynomially growing of degree 0), then $\bbG$ is virtually a direct product $\bbF\times \bbZ$ and hence not finite index rigid.
Recent work of Lazarovich--Rahamim--Sisto \cite{lazarovichrahamimsisto} addresses \Cref{main thm} in the case when $\phi$ is exponentially growing (see also \cite{lazarovichfir} for the atoroidal case). 
Thus, it remains to prove the following:

\begin{theoremA}\label{linearly growing are fir}
    Let $\bbF$ be a finite rank non-abelian free group. 
    Let $\phi\in \Out(\bbF)$ be a polynomially growing outer automorphism with $\deg(\phi) \geq 1$.
    Then the free-by-cyclic group $\bbG \cong \bbF \rtimes_\phi \bbZ$  is finite index rigid.
\end{theoremA}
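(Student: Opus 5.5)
Our plan is to find an invariant of finite index subgroups that is preserved by isomorphism and scales nontrivially and multiplicatively (or at least monotonically) with the index. We begin by replacing $\phi$ by a power that is unipotent, in the sense of Bestvina--Feighn--Handel, so that $\phi$ is represented by a UPG relative train track, or equivalently, for the linearly growing case, by a Dehn twist. Passing to the finite index subgroup $\bbF\rtimes_{\phi^k}\bbZ$ is harmless: finite index rigidity passes up from a finite index subgroup, provided the invariant we use is defined for all finite index subgroups and multiplies by the index. Any finite index subgroup $H\le\bbG$ is again free-by-cyclic, $H\cong \bbF'\rtimes_{\psi}\bbZ$, where $\bbF'=H\cap\bbF$ has rank $1+[\bbF:\bbF'](\mathrm{rk}\,\bbF-1)$. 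The monodromy $\psi$ is polynomially growing of the same degree $d=\deg(\phi)$.

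The key step is to construct a canonical graph-of-groups decomposition of $\bbG$ that is invariant under isomorphism. For $\deg\ge1$ we take the JSJ-type decomposition along $\bbZ^2$ subgroups, or the cyclic or abelian splitting coming from the maximal linearly growing stratum. Its vertex groups are of the form $\bbF_v\rtimes\bbZ$ with lower degree monodromy, and its edge groups are $\bbZ^2=\langle c,t\rangle$ for twistor classes $c$. Since it is canonical (for instance via Guirardel--Levitt trees of cylinders), an isomorphism $H_1\to H_2$ carries decomposition to decomposition. Iterating down in degree, we reach the bottom level, where the vertex groups are virtually $\bbF_v\times\bbZ$. We then define the invariant as a weighted Euler characteristic of the underlying graph of vertex free groups,
\[
\chi(H)=\sum_{v}\chi(\bbF_v),
\]
where the sum runs over the bottom vertex groups $\bbF_v\times\bbZ$ and $\bbF_v$ is intrinsically the quotient by the center, well defined up to isomorphism. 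Because each bottom vertex group $\bbF_v\times\bbZ$ has a well-defined center, $\chi(\bbF_v)$ does not depend on the product decomposition. Under passage to finite index covers, the Euler characteristic of the fibre multiplies by the degree of the cover restricted to the fibre direction.

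The main obstacle is this last point. The index $[\bbG:H]$ factors as $[\bbF:\bbF']$ times the index in the $\bbZ$ direction, and $\chi$ only detects the fibre part. Hence $\chi(H_1)=\chi(H_2)$ gives $[\bbF:\bbF_1']=[\bbF:\bbF_2']$, but we still need the $\bbZ$-direction indices to agree. To recover these, we use a second invariant, such as the minimal ``twisting amount'' along the $\bbZ^2$ edge groups. Concretely, in each edge group $\langle c,t\rangle$ the centralizers in the adjacent vertex groups determine a canonical pair of lines, and the covolume, or index, between the lattices they generate scales by the vertical degree. Degree $\ge1$ is exactly what guarantees at least one such edge with nonzero twist, so the invariant is nontrivial. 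In the finite order case the invariant vanishes, consistent with the failure of rigidity there. Checking that this twist invariant is well defined and scales exactly by the vertical index is where we expect the real work. For higher degree we expect to need an induction on the polynomial degree using the efficient relative train track filtration.
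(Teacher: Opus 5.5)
There is a genuine gap, located exactly at the obstacle you identify, and the proposed fix does not work. After passing to UPG and a normal subgroup $H$ of index $d$, a degree-one vertex group $G_v=F_v\times Z_v$ with $Z_v=\gen{y_vt}$ contributes $\chi(F_v)\,d/f_v$ to your sum. Here $f_v=[Z_v:Z_v\cap H]$ is the order of the image of $y_vt$ in $\bbG/H$, and it varies with $v$ because the $y_v$ differ between vertex orbits. So $\sum_v\chi(\bbF_v)$ is not $\chi(\bbF')$, and equality for $H_1,H_2$ does not give $[\bbF:\bbF_1']=[\bbF:\bbF_2']$. (In any case $\chi(\bbF')$ is not an invariant of $H$, since $H$ fibres in many ways.) Concretely, take $\bbF=\gen{a,b,c}$ with $\Phi(a)=a$, $\Phi(b)=b$, $\Phi(c)=aca\ii$. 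The vertex groups are $\gen{a,b}\times\gen{t}$ and $\gen{a,c}\times\gen{a\ii t}$. Let $H$ be the kernel of $\bbG\to\bbZ/2$ with $a\mapsto1$ and $b,c,t\mapsto0$. Then $f=1$ and $f=2$ at the two vertices, and your sum for $H$ is $-2-1=-3\neq 2\cdot(-2)$.

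For the same reason there is no single ``vertical index'' for your twist invariant to scale by. One has $\Delta(\hat e)=\Delta(e)f_1f_2/[E:\hat E]$, which mixes the fibre degrees at both endpoints with the edge degree. Knowing that each twist is nonzero also does not make any explicit combination nonzero: the analogous covering invariants of Wang--Wu and Luecke--Wu for graph manifolds can vanish.

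The missing idea is the Yu--Wang argument of \Cref{prop:fir linear}. In $\alpha(e)=1/(\Delta(e)\bfx(v_1)\bfx(v_2))$ the $f_v$ cancel, which gives the local identity $d\,u_e\alpha(\hat e)=\alpha(e)u_1u_2$ of \Cref{alpha of cover}. These identities assemble into $\bfX^H\bfB^\bbG=[\bbG:H]\,\bfH^H\bfW^H$, where $\bfH^H$ is preserved by isomorphisms. Comparing the two identities for $H\cong H'$ via Perron--Frobenius forces $[\bbG:H]=[\bbG:H']$, without ever writing down a covering invariant.

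Your higher-degree plan inherits the same defect: summing $\chi$ over the bottom-level $F_v\times\bbZ$ pieces again weights each piece by its own vertical degree. The paper never descends to the bottom level. Once degree-one groups are known to be FIR, \Cref{fir to multiplicative} gives an abstract positive multiplicative function $\xi_1$ on $\calG_1$, with one value per commensurability class and no formula needed. Set $\xi=0$ on degree-zero groups (including $\bbZ$ and $\bbZ^2$), and define inductively $\xi_{d+1}(\bbG)=\sum_{v\in\bbT/\bbG}\xi(G_v)$ over the canonical tree. This is an isomorphism invariant by (T3). It scales exactly with index by \Cref{cor:gen Euler char}, which needs only multiplicativity of $\xi$ on vertex groups. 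It is positive because some vertex group has degree exactly $d$. The degree-zero pieces, where the vertical-degree problem lives, simply contribute $0$.

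Your reduction to UPG and your use of a canonical tree of cylinders do match the paper. What is missing is the Yu--Wang step in degree one, and this induction in place of a bottom-level Euler characteristic.
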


By \cite{Dahmani-Li, Ghosh} (see also  \cite{JPPoly}), the growth of the monodromy $\phi$ is a quasi-isometry invariant of $\bbG$. 
In particular, if $\bbG$ is quasi-isometric to $\bbG' = \bbF' \rtimes_\psi \bbZ$, then $\phi$ is a polynomially growing outer automorphism if and only if $\psi$ is. Moreover, by \cite{Macura}, $\deg(\phi) = \deg(\psi)$. Therefore, we say  $\bbG$ is a \emph{polynomial free-by-cyclic group} with $\deg(\bbG) = d$ if $\phi$ is polynomially growing with $\deg(\phi) = d$.

\subsubsection*{Motivation from graph manifolds} Our approach to show that polynomial free-by-cyclic groups are finite index rigid is based on the corresponding result for graph manifolds. We give a brief overview of the graph manifolds case. 

A \emph{graph manifold} $M$ is a 3-manifold that admits a non-empty splitting surface (composed of tori) cutting $M$ into Seifert manifold pieces (called the vertex manifolds). 
In \cite{wangwu}, Wang--Wu exhibited a covering invariant for a closed Seifert 3-manifold $M$  given by $\chi(OM)^2/e(M)$, where $\chi(OM)$ is the Euler characteristic of the base orbifold and $e(M)$ is the \emph{Euler number}. This invariant is non-zero when $M$ is not covered by (surface)$\times S^1$ or a torus bundle over $S^1$. In the same paper, the authors defined a covering invariant $\omega(M)$ for a graph manifold $M$ that depends on the \emph{fiber intersection numbers} along the splitting tori and the orbifold Euler characteristics of the vertex manifolds. However, $\omega(M)$ can be zero for some graph manifolds. 

A straightforward way to define a covering invariant for $M$ would be to sum over covering invariants of the vertex manifolds. In \cite{lueckewu}, Luecke--Wu did exactly this. They defined a notion of \emph{relative Euler number} and used it to define a covering invariant for Seifert manifolds with boundary (the vertex manifolds). However, this covering invariant $\sigma(M)$ for the graph manifold can also be zero at times.

Based on the setup created by Wang--Wu,  in \cite{yuwang} Yu--Wang circumvented having to define an explicit covering invariant and directly showed that all graph manifolds, except those covered by (surface)$\times S^1$ or a torus bundle over $S^1$, are finite index rigid.

\subsubsection*{Proof strategy}
A polynomial free-by-cyclic group $\bbG$ of degree at least one admits a canonical \emph{tree of cylinders} (JSJ decomposition) with at least one vertex group a polynomial free-by-cyclic groups of lower degree (\Cref{canonical tree}). 
We inductively define a covering invariant for $\bbG$ by summing up covering invariants of vertex groups (of lower degree) of the tree of cylinders (\Cref{prop: multiplicative function for free-by-cyclic}). 
However, in the base case when $\bbG$ is a linear free-by-cyclic group, the vertex groups of the tree of cylinders are direct products of a free group with infinite cyclic group and don't satisfy finite index rigidity. 
In this case, we use the techniques of \cite{yuwang} to obtain finite index rigidity (\Cref{prop:fir linear}). 

\subsubsection*{Acknowledgments}
We would like to thank Naomi Andrew, Yassine Guerch, Camille Horbez and Richard D. Wade, the authors of \cite{andrewguerchhorbezwade}, for sharing a  preprint of their ongoing work. We also thank Naomi Andrew for comments on a draft. 
We gratefully acknowledge the hospitality of the Isaac Newton Institute and Tata Institute of Fundamental Research which enabled this collaboration.
RG was supported by the Department of Atomic Energy, Government of India, under Project Identification No. RTI 4014 
and partially supported by an endowment of the Infosys Foundation.
NL was supported by the Israel Science Foundation (grant no. 1576/23).

\section{Preliminaries on Finite index rigidity}
In this section we prove some general statements about finite index rigidity and multiplicative functions on a collection of groups.

\subsection{Finite index rigidity}
A group $G$ is called \emph{finite index rigid } (FIR) if for any pair of finite index subgroups $H_1, H_2 \le G$, if $H_1 \cong H_2$ then $[G:H_1] = [G:H_2]$.

A group $G$ is called \emph{normal finite index rigid} (nFIR) if for any normal finite index subgroups $N_1,N_2\normalin G$, $N_1 \cong N_2$ implies $[G:N_1]=[G:N_2]$. The following lemma is an adaptation of {\cite[Lemma 2.1]{wangwu}}.

\begin{lemma}\label{reduction to normal}
The following are equivalent:
\begin{enumerate}
    \item $G$ is FIR.
    \item Every finite index subgroup of $G$ is FIR.
    \item Every finite index subgroup $H\le G$ has a finite index subgroup $K\le H$ that is nFIR.
\end{enumerate}
\end{lemma}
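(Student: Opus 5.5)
The plan is to prove the cycle of implications $(1)\Rightarrow(2)\Rightarrow(3)\Rightarrow(1)$. Throughout, the only tool needed is the multiplicativity of index in towers, together with the fact that a group isomorphism carries finite index subgroups to finite index subgroups with the same index.

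For $(1)\Rightarrow(2)$, let $H\le G$ have finite index, and let $H_1,H_2\le H$ be isomorphic finite index subgroups. Then $H_1,H_2$ also have finite index in $G$, so (1) gives $[G:H_1]=[G:H_2]$. Dividing by $[G:H]$ yields $[H:H_1]=[H:H_2]$.

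For $(2)\Rightarrow(3)$, take $K=H$. An FIR group is in particular nFIR, since normal subgroups are a special case.

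The substantive direction is $(3)\Rightarrow(1)$. Let $H_1,H_2\le G$ be finite index subgroups and $f\colon H_1\to H_2$ an isomorphism.
\begin{enumerate}
\item Apply (3) to $H_1$ to obtain a finite index subgroup $K\le H_1$ that is nFIR.
\item Replace $K$ by a finite index subgroup $N\le K$ such that $N$ is characteristic in $K$; since $K$ is finitely generated or not, the right choice is explained below.
\item Transport via $f$: set $K'=f(K)\le H_2$, which is isomorphic to $K$ and satisfies $[H_2:K']=[H_1:K]$.
\end{enumerate}
It then suffices to compare $[G:K]$ with $[G:K']$. If these are equal, then
\[[G:H_1]=[G:K]/[H_1:K]=[G:K']/[H_2:K']=[G:H_2].\]

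The main obstacle is that $K$ and $K'$ are isomorphic finite index subgroups of $G$, but nFIR of $K$ only controls normal subgroups of $K$ itself, while $K'$ need not lie in $K$. To fix this, I would pass to the core $C=\bigcap_{g\in G} gKg^{-1}\cap \bigcap_{g} gK'g^{-1}$, which is normal of finite index in $G$. Then consider $L=C\cap f^{-1}(C)$. This is not obviously normal in $K$, so the construction needs to be done more carefully.

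The cleaner route, following Wang--Wu, is to choose a single finite index subgroup $N\le G$ that is normal in $G$ and contained in $K$. Both $N$ and $f(N)$ are then finite index subgroups of $K'$ and of $G$. Since $f(N)$ may not sit inside $K$, I would instead use $N$ and $M:=f^{-1}(N\cap f(N))$, intersected further with $N$, and iterate.

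Honestly, the expected key move is the following. Let $N\trianglelefteq G$ be of finite index with $N\le K$, and set $N'=f^{-1}(N)$, which is a finite index subgroup of $K$ because $N\le K'$ can be arranged by taking $N\le K\cap K'$. Then replace both by their normal core inside $K$: put $N_1=\mathrm{core}_K(N\cap N')$. Its image $f(N_1)$ must also be normal in $f(K)=K'$, not in $K$, so this still mismatches.

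To truly get both subgroups normal in the same nFIR group, I would use that $N_1$ and $f(N_1)$ can both be placed inside one characteristic-type subgroup. Concretely, when $K$ is finitely generated (which holds in the paper's setting, since free-by-cyclic groups are finitely generated), let $N_1$ be the intersection of all subgroups of $K$ of index at most $m$, where $m$ is chosen large. This subgroup is characteristic in $K$, so $f(N_1)$ is the analogous subgroup of $K'$. I would then choose $m$ so that $N_1\le N$ and $f(N_1)\le N\le K$, and check normality of $f(N_1)$ in $K$ using the normality of $N$ in $G$. Once both $N_1$ and $f(N_1)$ are normal of finite index in $K$, nFIR gives $[K:N_1]=[K:f(N_1)]$, hence $[G:N_1]=[G:f(N_1)]$. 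Combining with $[H_1:N_1]=[H_2:f(N_1)]$ yields $[G:H_1]=[G:H_2]$. The delicate point, which I expect to be the main obstacle, is arranging normality of $f(N_1)$ in $K$, likely by shrinking to characteristic subgroups of $N$.
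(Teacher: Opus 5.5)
Your implications $(1)\Rightarrow(2)$ and $(2)\Rightarrow(3)$ are fine. Your closing index computation is also correct, \emph{provided} you have some $L\le K$ with $L$ and $f(L)$ both normal of finite index in $K$. The gap is that you never produce such an $L$: the step ``check normality of $f(N_1)$ in $K$ using the normality of $N$ in $G$'' has no justification. Here $f(N_1)$ is the analogous characteristic subgroup of $K'=f(K)$, so it is normalized by $K'$. But for $k\in K$ the conjugate $kf(N_1)k^{-1}$ is the analogous subgroup of $kK'k^{-1}$, and normality of $N$ in $G$ says nothing about subgroups of $N$.

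In fact the step fails. Take $G=H_1=H_2=F_2=\langle a,b\rangle$ and $K=\langle a,\,bab^{-2},\,b^2ab^{-1},\,b^3\rangle$, a non-normal subgroup of index $3$ with this free basis. Let $f$ be conjugation by $b$, let $N=K\cap f(K)$ be the kernel of $F_2\to S_3$, $a\mapsto(23)$, $b\mapsto(123)$, and take $m=2$. Then $ba^2b^{-1}=f(a^2)\in f(N_1)$. But $ba^2b^{-1}=(bab^{-2})(b^2ab^{-1})$ is nontrivial in $H_1(K;\mathbb{Z}/2)$, so it is not in $N_1$. Since $K$ has prime index, $\langle K,f(K)\rangle=F_2$. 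So if $K$ normalized $f(N_1)$, then $f(N_1)$ would be normal in $F_2$, forcing $f(N_1)=b^{-1}f(N_1)b=N_1$, a contradiction. Note also that the lemma is stated for arbitrary $G$, so relying on finite generation would prove less than what is stated.

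The missing idea is to pull back along $f$ rather than push forward. You already wrote down the right subgroup, $N'=f^{-1}(N)$, before abandoning it. With $N\normalin G$ of finite index and $N\le K\cap K'$, $N$ is normal in $K'$, so $f^{-1}(N)$ is normal in $f^{-1}(K')=K$; and $N$ itself is normal in $K$. Thus $N$ and $f^{-1}(N)\cong N$ are finite index normal subgroups of $K$. nFIR then gives $[K:N]=[K:f^{-1}(N)]=[K':N]$. Combining
$[G:H_1][H_1:K][K:N]=[G:N]=[G:H_2][H_2:K'][K':N]$
with $[H_1:K]=[H_2:K']$ yields $[G:H_1]=[G:H_2]$. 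No cores, characteristic subgroups or finiteness assumptions are needed.

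This is the paper's argument with the roles swapped. The paper takes the nFIR subgroup $K_2\le H_2$, sets $K_1=f^{-1}(K_2)$, and picks $N\normalin G$ inside $K_1\cap K_2$. It then compares $N$ and $f(N)$ inside $K_2$, using that $N\normalin K_1$ implies $f(N)\normalin K_2$.
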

\begin{proof}
The implications $(1)\implies (2)\implies (3)$ are clear.
To prove $(3)\implies (1)$, let $H_1,H_2\le G$ be finite index subgroups, and let $f:H_1\to H_2$ be an isomorphism. 
By the assumption there exists a finite index subgroup $K_2\le H_2$ which is nFIR.
Let $K_1=f\ii(K_2)$.
Then $[H_1:K_1]=[H_2:K_2]$.
Let $N\le K_1\cap K_2$ be a normal finite index subgroup of $G$. Then $N\normalin K_2$, and $f(N)\normalin K_2$ (because $N\normalin K_1$).
Since $K_2$ is nFIR, $[K_2:N]=[K_2:f(N)]=[K_1:N]$. 
Finally, using $$[G:H_1][H_1:K_1][K_1:N]=[G:N]=[G:H_2][H_2:K_2][K_2:N]$$ we conclude that $[G:H_1]=[G:H_2]$.
\end{proof}

\subsection{Multiplicative functions}
We now define a multiplicative function on a set of groups and use it to define a generalized Euler characteristic. 

\begin{definition}
Let $\calG$ be a set of groups closed under taking finite index subgroups. A \emph{multiplicative function} $\xi\colon\calG\to \bbR$ is a 
function that satisfies:
\begin{enumerate}
    \item $\xi$ is a group invariant, i.e. if $G_1\simeq G_2$ then $\xi(G_1)=\xi(G_2)$.
    \item $\xi$ is proportional to index, i.e. if $H\le G$ has finite index then $\xi(H)=[G:H]\xi(G)$.
\end{enumerate}
\end{definition}

We say two groups $H_1$ and $H_2$ are \emph{abstractly commensurable} if there exist finite index subgroups $K_1 \le H_1$ and $K_2 \le H_2$, such that $K_1 \cong K_2$. This is an equivalence relation on the set of groups. 

\begin{remark}
    The two conditions in the above definition imply that if $H_1,H_2$ are abstractly commensurable, then $\xi(H_1)$ is determined by $\xi(H_2)$. Thus, $\xi$ can be defined independently on abstract commensurability classes. 
\end{remark}

\begin{lemma}\label{fir to multiplicative}
    Let $\calG$ be a set of groups closed under taking finite index subgroups. There exists a multiplicative function $\xi\colon \calG \to \bbRnn$ such that $\xi(G)>0$ for every finite index rigid group.
\end{lemma}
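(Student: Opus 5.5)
We construct $\xi$ one abstract commensurability class at a time, as the Remark allows. Fix an abstract commensurability class $\mathcal{C}$ and let $\calG_{\mathcal C}=\calG\cap\mathcal C$; this set is closed under finite index subgroups, since a finite index subgroup is commensurable with its ambient group. If $\calG_{\mathcal C}$ contains no finite index rigid group, set $\xi\equiv 0$ on it. Otherwise choose a finite index rigid $G_0\in\calG_{\mathcal C}$. For $G\in\calG_{\mathcal C}$, pick finite index subgroups $K\le G$, $K_0\le G_0$ with $K\cong K_0$, and define
$$\xi(G)=\frac{[G_0:K_0]}{[G:K]}>0.$$

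The main step, and the only real obstacle, is showing this is independent of the choices. Suppose $K\le G$, $K_0\le G_0$ with an isomorphism $f\colon K\to K_0$, and $K'\le G$, $K_0'\le G_0$ with an isomorphism $f'\colon K'\to K_0'$. Set $L=K\cap K'$, which has finite index in $G$. Its images satisfy $f(L)\cong L\cong f'(L)$, and both are finite index subgroups of $G_0$. Since $G_0$ is FIR, $[G_0:f(L)]=[G_0:f'(L)]$. Using $[K:L]=[K_0:f(L)]$ and $[K':L]=[K_0':f'(L)]$ (isomorphisms preserve index), we get
$$\frac{[G_0:K_0]}{[G:K]}=\frac{[G_0:f(L)]}{[G:L]}=\frac{[G_0:f'(L)]}{[G:L]}=\frac{[G_0:K_0']}{[G:K']}.$$
So the value is well defined.

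Next we check the two axioms. For group invariance: if $G\cong G'$ via $h$, then any witness pair $(K,K_0)$ for $G$ gives the pair $(h(K),K_0)$ for $G'$ with the same indices. For proportionality: if $H\le G$ has finite index, choose the witness $K\le H$, which is also a witness inside $G$. Then $[G:K]=[G:H][H:K]$, so $\xi(H)=[G:H]\,\xi(G)$.

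It remains to check that every FIR group $G\in\calG$ has $\xi(G)>0$. A FIR group lies in a class where we chose some FIR $G_0$, so $\xi(G)>0$ by construction. The choice of $G_0$ within a class needs the axiom of choice in general, but any choice works.
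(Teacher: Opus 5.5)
Your proof is correct and uses the same construction as the paper: fix one group in each abstract commensurability class, normalize $\xi$ there, and extend by the ratio of indices of isomorphic finite index subgroups. The only differences are minor. You pick an FIR representative whenever the class contains one, so you do not need the paper's remark that FIR is a commensurability invariant. You also spell out the well-definedness check via $L=K\cap K'$ and the FIR property of $G_0$, which the paper leaves implicit.
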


\begin{proof}
    If $G$ is finite index rigid then any group commensurable to $G$ is finite index rigid. 
    For each commensurability class of groups in $\calG$, pick a representative and set the function $\xi$ on this representative to be $1$ if it is finite index rigid and $0$ otherwise.
    For any group $G\in \calG$, set $\xi(G)=\frac{[G':K']}{[G:K]}\xi(G')$ where $G'$ is the representative of the commensurability class of $G$, and $K,K'$ are isomorphic finite index subgroups in $G,G'$ respectively.
\end{proof}

Inspired by the work of Sykiotis \cite{sykiotis2018complexity}, we define a generalized Euler characteristic for a group acting on a set and show that it behaves well with respect to a finite index subgroup.

\begin{definition}[Generalized Euler characteristic]
Let $\calG$ be a set of groups closed under taking finite index subgroups. Let $G$ be a group acting on a set $X$ with finitely many orbits and stabilizers in $\calG$. Let $\xi\colon \calG\to \bbR$ be a multiplicative function.
Define
$$\chi_\xi(G\actson X) = \sum_{\cl{x}\in X/G} \xi(G_{\cl{x}}), $$ where $G_{\cl{x}}$ denotes the stabilizer in $G$ of a representative of $\cl{x}$ in $X$.  
\end{definition}

\begin{lemma}\label{sum of indices}
    Let $G\actson  X$ transitively and let $H\le G$ be a finite index subgroup.  Let $x_1,\dots,x_k\in X$ be $H$-orbit representatives with stabilizers $H_{x_i}$ (resp. $G_{x_i}$) in $H$ (resp. $G$), then $\sum _{i=1}^k[G_{x_i}:H_{x_i}]=[G:H].$
\end{lemma}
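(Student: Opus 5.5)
The plan is to count the coset space $G/H$ in two ways, using the orbit--stabilizer correspondence applied to the $H$-orbits on $X$. Fix a base point $x_1\in X$. Since $G$ acts transitively, we identify $X$ with the coset space $G/G_{x_1}$ as $G$-sets. The $H$-orbits on $X$ then correspond to the double cosets $H\backslash G/G_{x_1}$. Because $[G:H]<\infty$, there are only finitely many of these, which justifies the finite list $x_1,\dots,x_k$.

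The key step is to partition $G/H$ according to the $H$-orbits. Consider the map
\[
\pi\colon G/H \to H\backslash X,\qquad gH\mapsto H\cdot g^{-1}x_1 .
\]
It is well defined, since replacing $g$ by $gh$ gives $h^{-1}g^{-1}x_1$, which lies in the same $H$-orbit. It is surjective by transitivity. It is cleaner to use the left--right symmetric version: pass to right cosets and consider
\[
H\backslash G \to H\backslash X,\qquad Hg\mapsto H\cdot g x_1 .
\]
For each $i$ choose $g_i\in G$ with $g_ix_1=x_i$. The fibre over the orbit $H x_i$ is then the set of right cosets $Hg$ with $gx_1\in Hx_i$. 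Equivalently, $g\in Hg_iG_{x_1}=HG_{x_i}g_i$, since $G_{x_i}=g_iG_{x_1}g_i^{-1}$. So the fibre is in bijection with the right $H$-cosets contained in the double coset $HG_{x_i}$.

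It remains to count the right $H$-cosets in $HG_{x_i}$. The map
\[
(H\cap G_{x_i})\backslash G_{x_i}\to H\backslash HG_{x_i},\qquad (H\cap G_{x_i})s\mapsto Hs,
\]
is a bijection. Indeed, $Hs=Hs'$ for $s,s'\in G_{x_i}$ exactly when $s's^{-1}\in H\cap G_{x_i}=H_{x_i}$. So the fibre over $Hx_i$ has exactly $[G_{x_i}:H_{x_i}]$ elements, and this number is finite since $H_{x_i}$ has finite index in $G_{x_i}$. Summing the fibre sizes over the $k$ orbits gives $\sum_i[G_{x_i}:H_{x_i}]=|H\backslash G|=[G:H]$.

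The only delicate point is bookkeeping. One must check that the fibre really depends only on the orbit $Hx_i$ and not on the choice of $g_i$, and must keep left and right cosets straight. I expect no genuine obstacle beyond this.
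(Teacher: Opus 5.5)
Your proof is correct and is essentially the paper's argument. The paper writes down the explicit transversal $\{g_{ij}g_i\}$ of the right cosets $H\backslash G$, where $g_{ij}$ runs over right $H_{x_i}$-coset representatives in $G_{x_i}$; this is exactly your fibration $H\backslash G\to H\backslash X$, $Hg\mapsto Hgx_1$, with the fibre over $Hx_i$ identified with $H_{x_i}\backslash G_{x_i}$ after translating by $g_i$.
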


\begin{proof}
    Fix $x_0\in X$, and fix $g_i\in G$ such that $g_i x_0=x_i$.
    Fix  
    $g_{i1},\dots,g_{id_i}$ 
    representatives of right $H_{x_i}$-cosets in $G_{x_i}$ (where $d_i=[G_{x_i}:H_{x_i}]$).
    Let us show that the set $\{g_{ij}g_i\mid 1\le i\le k,1\le j\le d_i\}$, which has size $\sum_{i=1}^k d_i$ is a set of right $H$-coset representatives in $G$.

    For all $g\in G$, there exists a unique $i$ and some $h\in H$ such that $gx_0=hx_i=gg_i^{-1}x_i$, from which we get $h\ii g g_i\ii \in G_{x_i}$. There exists a unique $g_{ij}$ such that $h\ii g g_i\ii=h'g_{ij}$ for $h'\in H_{x_i}$.
    Therefore, $g=hh'g_{ij}g_i$ is contained in the right $H$-coset $Hg_{ij}g_i$.

    Suppose $Hg_{ij}g_i = Hg_{kl}g_k$.   
     Since $g_{ij}g_i x_0 = x_i$ and $g_{kl}g_k x_0 = x_k$, we get $H x_i=Hg_{ij}g_i x_0 = Hg_{kl}g_k x_0=H x_k$. which implies  $i=k$,  $g_i=g_k$
    and $Hg_{ij}=Hg_{il}$. 
Thus $g_{ij}g_{il}^{-1} \in H$ and also by definition $g_{ij}g_{il}^{-1} \in G_{x_i}$. Therefore, $g_{ij}g_{il}^{-1} \in H_{x_i} = H \cap G_{x_i}$ which implies $H_{x_i}g_{ij}=H_{x_i}g_{il}$ and hence $j=l$.  
\end{proof}

\begin{corollary}\label{cor:gen Euler char}
Let $\calG$ be a set of groups closed under taking finite index subgroups, and let $\xi:\calG\to \bbR$ be a multiplicative function. Let $G$ be a group acting on a set $X$ with finitely many orbits and stabilizers in $\calG$.
If $H\le G$ has finite index, then 
$\chi_\xi(H\actson X)=[G:H]\chi_\xi(G\actson X)$.
\end{corollary}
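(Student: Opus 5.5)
The plan is to reduce to the transitive case and then apply \Cref{sum of indices} orbit by orbit, using that $\xi$ is proportional to index.

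First, decompose $X$ into its $G$-orbits $X = \bigsqcup_{\cl{x}\in X/G} Gx$, where $x$ runs over chosen representatives. Since $H\le G$, each $G$-orbit is a union of $H$-orbits. Because $[G:H]<\infty$, each $G$-orbit splits into finitely many $H$-orbits; indeed $Gx=\bigcup_j Hg_jx$ over coset representatives $g_j$ of $H\backslash G$. Hence $H\actson X$ also has finitely many orbits. Its stabilizers $H_y=H\cap G_y$ have finite index in $G_y\in\calG$. Since $\calG$ is closed under finite index subgroups, $H_y\in\calG$, so $\chi_\xi(H\actson X)$ is defined. Both sides of the claimed identity are sums over $G$-orbits: the left side is obtained by grouping the $H$-orbits according to the $G$-orbit containing them. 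It therefore suffices to prove, for a single $G$-orbit $Y=Gx$, that
$$\sum_{\cl{y}\in Y/H}\xi(H_{\cl{y}})=[G:H]\,\xi(G_x).$$

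Next, fix such a transitive piece $Y$ and pick $H$-orbit representatives $x_1,\dots,x_k\in Y$. Each $x_i$ lies in the same $G$-orbit as $x$, so $G_{x_i}$ is conjugate to $G_x$, hence isomorphic to it. Invariance of $\xi$ then gives $\xi(G_{x_i})=\xi(G_x)$. Since $H_{x_i}=H\cap G_{x_i}$ has finite index in $G_{x_i}$, proportionality gives $\xi(H_{x_i})=[G_{x_i}:H_{x_i}]\,\xi(G_{x_i})=[G_{x_i}:H_{x_i}]\,\xi(G_x)$. Summing over $i$ and applying \Cref{sum of indices} to the transitive action $G\actson Y$ yields $\sum_i \xi(H_{x_i})=\xi(G_x)\sum_i[G_{x_i}:H_{x_i}]=[G:H]\,\xi(G_x)$. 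Summing over $G$-orbits finishes the argument.

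The only point needing care is well-definedness: $\chi_\xi$ must not depend on the choice of orbit representatives. This holds because stabilizers of points in one orbit are conjugate and $\xi$ is an isomorphism invariant. It should be noted once so that representatives can be chosen freely. Otherwise no step is a real obstacle; the main content is \Cref{sum of indices}, which is already established.
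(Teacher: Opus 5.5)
Your proposal is correct and follows the paper's own proof. You group the $H$-orbits by the $G$-orbit containing them, apply \Cref{sum of indices} to each transitive piece, and use proportionality of $\xi$ to index. Your extra checks are details the paper leaves implicit: that $H\actson X$ has finitely many orbits with stabilizers in $\calG$, and that $\xi(G_{x_i})=\xi(G_x)$ because the stabilizers are conjugate.
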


\begin{proof}
    Let $v\in X/G$ and let $v_1,\dots,v_k\in X/H$ be the elements that project to $v$ under the quotient map $X/H\to X/G$. Then by \Cref{sum of indices} and multiplicativity of the function $\xi$, we get
    $$[G:H]\xi(G_v)=\sum _{i=1}^k[G_{v}:H_{v_i}]\xi(G_v)=\sum _{i=1}^k\xi(H_{v_i}).$$
    Summing over all vertices of $X/G$ gives the desired equality.
\end{proof}

We will use the above result in \Cref{sec:higher degree} to show that polynomial free-by-cyclic groups of degree greater than or equal to two are finite index rigid. 
\subsection{Reduction to UPG}

A polynomially growing free group outer automorphism $\phi$ is called \emph{unipotent polynomially growing} (UPG) if the induced automorphism of $H_1(\bbF, \bbZ)$ is unipotent. We finish this section with a reduction to polynomial free-by-cyclic groups with UPG monodromy.

\begin{lemma}\label{reduction to UPG and normal}
    Let $d\ge 1$. The following are equivalent:
    \begin{enumerate}
    \item For every UPG element $\phi$ of degree $d$, the group $\bbF\rtimes _\phi\bbZ $ is nFIR.
    \item For every polynomially growing $\phi$ of degree $d$, the group $\bbF\rtimes _\phi\bbZ$ is FIR.
    \end{enumerate}
\end{lemma}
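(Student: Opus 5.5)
We prove the two implications separately; the substantive direction is $(1)\implies(2)$, and it runs through \Cref{reduction to normal}.

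For $(2)\implies(1)$: a UPG element $\phi$ of degree $d$ is in particular polynomially growing of degree $d$. So $\bbG=\bbF\rtimes_\phi\bbZ$ is FIR, and FIR trivially implies nFIR, since normal subgroups are a special case of subgroups.

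For $(1)\implies(2)$: let $\phi$ be polynomially growing of degree $d$ and $\bbG=\bbF\rtimes_\phi\bbZ$. We verify condition (3) of \Cref{reduction to normal}. So let $H\le \bbG$ be any finite index subgroup; we must produce a finite index $K\le H$ which is nFIR.

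\textbf{Step 1: $H$ is free-by-cyclic with a power of $\phi$.} The subgroup $H\cap\bbF$ has finite index in $\bbF$, so it is free of finite rank and non-abelian. The image of $H$ in $\bbG/\bbF\cong\bbZ$ is $m\bbZ$ for some $m\ge1$. Choosing $t\in H$ mapping to $m$, we get $H\cong (H\cap \bbF)\rtimes_{\psi}\bbZ$. Here $\psi$ is the outer class of conjugation by $t$ restricted to $H\cap\bbF$; it is represented by the restriction of a representative of $\phi^m$ composed with an inner automorphism of $\bbF$.

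\textbf{Step 2: pass to a UPG power.} By \cite{BestvinaHandel} (the standard fact that a polynomially growing outer automorphism has a UPG power), there is $k\ge1$ such that a suitable power is UPG. To get this on a characteristic subgroup, first pass to a finite index characteristic subgroup $\bbF'\le H\cap\bbF$ of $\bbF$. Take $\bbF'$ to be, say, the intersection of all subgroups of index $[\bbF:H\cap \bbF]$; this is invariant under every automorphism of $\bbF$ and contained in $H\cap\bbF$. Then $\psi^{j}$ restricts to an outer automorphism $\psi'$ of $\bbF'$, for $j$ chosen so that $t^j$ normalizes $\bbF'$ (any $j$ works, since $\bbF'$ is characteristic). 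Replacing by a further power $\psi'^k$, we may take it to be UPG, with action on $H_1(\bbF',\bbZ)$ unipotent. Set $K=\bbF'\rtimes\langle t^{jk}\rangle$, which has finite index in $H$.

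\textbf{Step 3: degree is preserved.} We check that the monodromy of $K$ has degree exactly $d$. Growth of conjugacy classes under a power $\phi^{n}$ is degree $d$ in the exponent, and conjugacy length in a finite index subgroup $\bbF'$ is comparable to that in $\bbF$ (the inclusion is a quasi-isometry). Moreover, every conjugacy class of $\bbF$ has a power lying in $\bbF'$, and passing to a power does not change the growth degree. Hence $\deg=d$; alternatively, this follows from the quasi-isometry invariance of the degree \cite{Macura} cited in the introduction, since $K$ is commensurable with $\bbG$. Since $d\ge1$, the monodromy is UPG of degree $d$, so by hypothesis (1) $K$ is nFIR. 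Thus condition (3) of \Cref{reduction to normal} holds, and $\bbG$ is FIR.

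The main obstacle is Step 2 and Step 3 together: making sure the passage to a UPG power happens on a finite index subgroup of $H$, rather than of $\bbG$, while keeping the monodromy an honest outer automorphism of a free group of degree $d$. Using a characteristic subgroup of $\bbF$ inside $H\cap\bbF$ handles the first issue. Citing the quasi-isometry invariance of degree handles the second.
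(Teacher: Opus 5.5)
Your proof is correct and follows the paper's route. The direction $(2)\Rightarrow(1)$ is immediate, and for $(1)\Rightarrow(2)$ you verify condition (3) of \Cref{reduction to normal} by exhibiting, inside each finite index $H$, a finite index free-by-cyclic subgroup with UPG monodromy of the same degree $d$. The paper simply quotes this from \cite[Proposition 3.5]{BFH:TitsII}, while you build it by hand, with the degree preserved via \cite{Macura} or your direct growth comparison.

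Your passage to a characteristic $\bbF'$ is harmless but unnecessary. Since $t\in H$ already normalizes $H\cap\bbF$, you could take a UPG power of $\psi$ directly, once you have noted, as in your Step 3, that $\psi$ is polynomially growing of degree $d$.
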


\begin{proof}
    Clearly, (2) implies (1).
    Now assume (1) holds. 
    By \Cref{reduction to normal} it suffices to show that every finite index subgroup has a further finite index subgroup which is nFIR.
    Indeed, every finite index subgroup is a polynomial free-by-cyclic group, by \cite[Proposition 3.5]{BFH:TitsII} every such group has a finite index subgroup which is free-by-cyclic with UPG monodromy, and by assumption those are nFIR.
\end{proof}

\section{Canonical trees}
The first goal in our proof of finite index rigidity is to find a canonical action $\bbG\actson \bbT$ with abelian edge stabilizers:

\begin{theorem}[{\cite{andrewguerchhorbezwade} and \cite{andrew2022free}}]\label{canonical tree}
    Let $\bbG=\bbF \rtimes_\phi \bbZ = \gen{\bbF,t  \mid  tgt\ii =\Phi(g)}$ where $\phi$ is a UPG outer automorphism of degree $d$ and $\Phi \in \Aut(\bbF)$ represents $\phi$.
    
    Then there exists a tree $\bbT = \bbT_\bbG$ and a minimal action $\bbG\actson \bbT$ such that:
    \begin{enumerate}[label = (T\arabic*)]
        \item\label{T: vertex groups} 
        Vertex groups are polynomial free-by-cyclic \footnote{Here \emph{free-by-cyclic} includes degenerate cases, such as $\bbZ=1\rtimes \bbZ$ and $\bbZ^2=\bbZ\rtimes \bbZ$, which are clearly polynomially growing of degree $0$.} groups of degree at most $d-1$, with at least one vertex group of degree equal to $d-1$. 
        \item \label{T: finite index}
        If $H\le \bbG$ has finite index, then $\bbT_H$ is $H$-equivariantly isomorphic to $\bbT_G$.
        \item\label{T: commensurability invariance} If $\eta\colon H\to H'\le \bbG$ is an isomorphism between finite index subgroups $H,H'\le \bbG$, then there is an $\eta$-equivariant isomorphism $H\actson \bbT_\bbG\to H'\actson \bbT_\bbG$. 
    \end{enumerate}
    If $d=1$, then moreover
    \begin{enumerate}[label = (T\arabic*)]\setcounter{enumi}{3}
        \item\label{T: edge groups linear} Edge groups are maximal $\bbZ^2$ subgroups of $\bbG$.
        \item\label{T: vertex groups linear} The tree has a bipartition $V(\bbT) = V_0 \sqcup V_1$. The stabilizers of vertices in $V_0$ are of the form $F \times \gen{yt}$ where $F\le \bbF$ is non-abelian and $y\in \bbF$; and those of vertices in $V_1$ are isomorphic to $\bbZ^2$. 
        \item\label{T: cylinders linear} Every vertex in $V_1$ has finite valence.
    \end{enumerate}
\end{theorem}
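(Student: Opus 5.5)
The tree $\bbT_\bbG$ will be the \emph{tree of cylinders} (in the sense of Guirardel--Levitt) of a suitable abelian JSJ-type splitting of $\bbG$. The plan is to build that splitting from an improved relative train track (or CT) representative of $\phi$, and then to read off \ref{T: vertex groups}--\ref{T: cylinders linear}.

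\textbf{Step 1: the initial splitting.} Since $\phi$ is UPG, I would take a CT representative $f\colon\Gamma\to\Gamma$ (Feighn--Handel). Every stratum is a single edge $E$ with $f(E)=E\cdot u_E$, where $u_E$ lies in lower strata. The edges of maximal height that are linear or higher-order, removed together, cut $\Gamma$ into $f$-invariant subgraphs. On each such subgraph the restricted growth is lower; one has to arrange that exactly the top-degree edges are cut, so that some component has degree exactly $d-1$. This yields a $\phi$-invariant free splitting of $\bbF$, and hence an action of $\bbG$ on the Bass--Serre tree $T_0$ of the mapping torus. Its vertex groups are mapping tori $F_v\rtimes\bbZ$ of the restrictions, which gives \ref{T: vertex groups}. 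Its edge groups are cyclic, generated by elements of the form $yt$, where $y$ is a suffix word.

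\textbf{Step 2: canonicity.} The splitting $T_0$ itself need not be canonical. I would pass to its tree of cylinders $\bbT$, where cylinders are equivalence classes of edges under commensurability of their stabilizers. The key input is that this tree of cylinders is the tree of cylinders of the cyclic (or abelian) JSJ deformation space of $\bbG$ relative to the natural class of elliptic subgroups, namely the subgroups of lower growth. That deformation space is invariant under all automorphisms of $\bbG$. Identifying the elliptic subgroups intrinsically, for instance by growth under conjugation or through the quasi-isometry invariance of \cite{Dahmani-Li, Ghosh, Macura}, makes the deformation space invariant under isomorphisms between finite index subgroups. Since the tree of cylinders depends only on the deformation space, this gives \ref{T: commensurability invariance}.

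\textbf{Step 3: finite index subgroups.} For \ref{T: finite index}, the restricted action of a finite index subgroup $H$ on $\bbT$ lies in the JSJ deformation space of $H$, because ellipticity of subgroups is unchanged under passing to finite index. The tree of cylinders is also stable under restriction, since commensurability of edge groups is preserved.

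\textbf{Step 4: the case $d=1$.} In a linear CT, the vertex groups are mapping tori of periodic, hence trivial, restrictions; after passing to the UPG case they have the form $F\times\gen{yt}$. The edges of the cylinder tree join these vertices to cylinder vertices, whose stabilizers are the commensurators of $\gen{a,yt}$. These are maximal $\bbZ^2$ subgroups, which gives \ref{T: edge groups linear} and \ref{T: vertex groups linear}. Finite valence of cylinder vertices, \ref{T: cylinders linear}, follows because a $\bbZ^2$ subgroup meets only finitely many orbits of edges: each edge group has finite index in the $\bbZ^2$, and there are finitely many edge orbits.

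I expect Step 2, the proof that the tree is invariant under isomorphisms of finite index subgroups, to be the main obstacle. This is the substance of the cited work \cite{andrewguerchhorbezwade, andrew2022free}, and in practice I would quote it rather than reprove it.
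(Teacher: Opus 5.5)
There is a genuine gap in the case $d=1$. Cutting the linear edges of a CT gives a \emph{free} splitting of $\bbF$. So, as you say yourself, the edge groups of $T_0$ in $\bbG$ are cyclic groups $\gen{yt}$, and the vertex groups are mapping tori of components of the fixed subgraph, which may be $\bbZ^2$ or $\bbZ$. Your Step 4, however, describes a tree with edge groups $\gen{a,yt}\cong\bbZ^2$ and $\bbZ^2$ cylinder vertices, and Step 1 never produces such a tree. With cyclic edge groups, a cylinder is a commensurability class of some $\gen{yt}$, and its stabilizer contains the centralizer $\mathrm{Fix}(\mathrm{ad}_y\circ\Phi)\times\gen{yt}$, which is usually non-abelian.

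In the paper's example $a\mapsto a$, $b\mapsto ba$, cutting $b$ gives vertex group $\gen{a,t}\cong\bbZ^2$ and edge group $\gen{t}$. The cylinder of $\gen t$ then has stabilizer containing $\gen{a,bab\ii}\times\gen t$, so the roles of $V_0$ and $V_1$ are reversed. For $a\mapsto a$, $b\mapsto b$, $c\mapsto ca$ it is worse. At the vertex $v_0$ with stabilizer $\gen{a,b}\times\gen t$, the outgoing edges have stabilizer $\gen t$. The incoming ones have stabilizers $\gen{gag\ii t}$ with $g\in\gen{a,b}$, and these are pairwise non-commensurable for distinct cosets $g\gen a$. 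So $v_0$ lies in infinitely many cylinders. Moreover, the cylinder of $\gen t$ is joined to $v_0$ by an edge whose stabilizer is the non-abelian group $\gen{a,b}\times\gen t$. Hence (T4) and (T5) fail for your tree, and your finite-valence argument for (T6), which needs $\bbZ^2$ edge groups, no longer applies.

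Step 2 breaks for the same reason. In the first example, $C_\bbG(t)\supseteq\gen{a,bab\ii}$ is not elliptic in $T_0$, since $a$ and $bab\ii$ fix only the distinct vertices $v_0$ and $bv_0$. So $T_0$ is not a splitting relative to the lower-degree subgroups, and it does not lie in the JSJ deformation space you invoke.

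The missing ingredient is the correct starting tree for $d=1$: the Cohen--Lustig (efficient Dehn twist) tree. This is a splitting of $\bbF$ over maximal \emph{cyclic} subgroups. Its vertex groups are the maximal subgroups of rank at least $2$, invariant up to conjugacy, on which $\phi$ restricts to an inner automorphism; in the example this is $\gen{a,b\ii ab}$ with a single loop. Extending it to $\bbG$ via the $\Phi$-equivariant isometry gives maximal $\bbZ^2$ edge groups and vertex groups $F\times\gen{yt}$ with $F$ non-abelian. Its tree of cylinders then inserts the $\bbZ^2$ cylinder vertices forming $V_1$, which is the picture your Step 4 actually uses. Maximality of edge groups also gives $G_e=G_v$ for $v\in V_1$. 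So the valence of $v$ equals that of its image in the finite graph $\bbT/\bbG$, which is how the paper obtains (T6).

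For $d\ge 2$ your route does agree with the paper's. There the BFH limiting tree is a free splitting with lower-degree vertex groups, giving maximal cyclic edge groups in $\bbG$. Canonicity comes from the JSJ property passing to $\rho|_H$ and $\rho|_{H'}\circ\eta$, together with Guirardel--Levitt's Corollary 4.10.
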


    An analogous description of the edges and vertices of $\bbT$ holds for $d\ge 2$ (see \cite{andrewguerchhorbezwade}) however this will not be used in this paper.

\begin{proof}[Sketch of proof:]
First consider the limiting tree $\bbF\actson T_\phi$ given by 
\cite[Theorem 13.2]{cohen1995verysmall} when $d=1$ 
and \cite{BFH:TitsII} when $d \ge 2$ (also see \cite[Theorem 6.1]{andrewguerchhorbezwade} for details).
The action $\bbF\actson T_\phi$ has the following properties:
\begin{itemize}
    \item edge stabilizers of $\bbF \actson T_\phi$ are maximal infinite cyclic if $d=1$ and trivial if $d\ge 2$,
    \item vertex stabilizers of $\bbF \actson T_\phi$ are either maximal $\phi$-invariant subgroups $F$ of $\bbF$ of rank at least 2 such that $\deg(\phi|_F) < \deg(\phi)$, or trivial when $d \ge 2$, 
    \item there exists at least one vertex group $F$ as above with $\deg(\phi|_F) = d-1$.
\end{itemize}

The tree $T_\phi$ is $\phi$-invariant, i.e. there exists a $\Phi$-equivariant graph automorphism $f\colon T_\phi\to T_\phi$. 
The action $\bbF\actson T_\phi$ can be extended to an action $\bbG\actson T_\phi$ by setting $t \cdot x = f(x)$ (see \cite[Lemma 3.1]{JPPoly}). 
The action $\bbG\actson T_\phi$ has the following properties (see \cite[Lemma 6.7, 6.8]{andrewguerchhorbezwade}):
\begin{itemize}
    \item the edge stabilizers of $\bbG \actson T_\phi$ are maximal $\bbZ^2$ subgroups if $d=1$, and maximal cyclic if $d\ge 2$,
    \item vertex stabilizers of $\bbG \actson T_\phi$  are either maximal polynomial free-by-cyclic subgroups of the form $F \rtimes_\psi \bbZ$ for a free group $F$ of rank at least 2 and $\deg(\psi) < \deg(\phi)$, or $\bbZ$ when $d \geq 2$, 
    \item at least one vertex group is of the form $F \rtimes_\psi \bbZ$ with $\deg(\psi) = d-1$. 
\end{itemize}  

Finally, let $\bbT$ be the collapsed tree of cylinders of $T_\phi$ \`a la Guirardel-Levitt \cite{guirardel2011trees} with respect to the collection $\calE$ of maximal abelian subgroups and the admissible relation of commensurability \cite[Definition 6.13]{andrewguerchhorbezwade}.

Property \ref{T: vertex groups}, \ref{T: finite index} follow from \cite[Theorem 6.15, Proposition 6.18]{andrewguerchhorbezwade}, respectively.

Property \ref{T: commensurability invariance} is as follows: With respect to the action $\rho$ of $\bbG$ on the limiting tree, $T_\phi$ is a slender JSJ tree relative to free-by-cyclic subgroups of lower degrees and with $\bbZ$ or $\bbZ^2$ (depending on $d$) edge stabilizers by \cite[Proposition 6.12]{andrewguerchhorbezwade}. Therefore, the same is true for the restriction $\rho|_H$ and $\rho|_{H'}\circ \eta$. It follows from Property \ref{T: finite index} and \cite[Corollary 4.10]{guirardel2011trees} that an $\eta$-equivariant isomorphism $\bbT\to \bbT$ exists.

Property \ref{T: edge groups linear} and \ref{T: vertex groups linear} follow from \cite[Lemma 6.16]{andrewguerchhorbezwade} for UPG $\phi$ (see also \cite[Proposition 5.2.2]{andrew2022free}).

Property \ref{T: cylinders linear} is as follows: By maximality of edge stabilizers \ref{T: edge groups linear}, the stabilizer of each $v\in V_1$ is equal to the stabilizer of each of its incident edges. This shows that the valence of $v$ in $\bbT$ is equal to the valence of the corresponding vertex $\bar v$ in the quotient graph $\bbT/\bbG$. Since $\bbG$ is finitely generated, and $\bbG\actson \bbT$ is minimal, the graph $\bbT/\bbG$ is finite. In particular, $\bar v$ has finite valence, and so does $v$.
\end{proof}

We give an example of a linear UPG outer automorphism and the associated limiting tree, which we will use to illustrate the construction in the next section.  

\begin{example}\label{eg:limiting tree}
    Let $\bbF = \gen{a, b}$ and $\phi \in \Out(\bbF)$ be the outer class of  $ \Phi \colon a \mapsto a, b \mapsto ba$. Let \[\bbG = \bbF \rtimes_\phi \bbZ = \gen{a, b, t \mid  tat\ii = a, tbt\ii = ba}.\] 
    The tree $\bbF \actson T_\phi$ is obtained by the construction of the limiting tree explained in \cite[Lemma 4.37]{BFH:TitsII}. 
   In \Cref{fig:Example1} the left figure shows $T_\phi/\bbF$ with graph of groups edge inclusion maps given by 
    $\bbZ \to\gen{b\ii ab}$, $\bbZ \to \gen{a}$ 
    and the right figure shows $T_\phi/\bbG$ with 
    graph of groups edge inclusion maps
    $\bbZ^2 \to \gen{b\ii ab, at}$, $\bbZ^2 \to \gen{a, batb\ii} = \gen{a, t}$.
    The tree of cylinders $\bbT$ is simply the barycentric subdivision of $T_\phi$.

\begin{figure}[htbp]
  \centering
    \includegraphics[scale=.6]{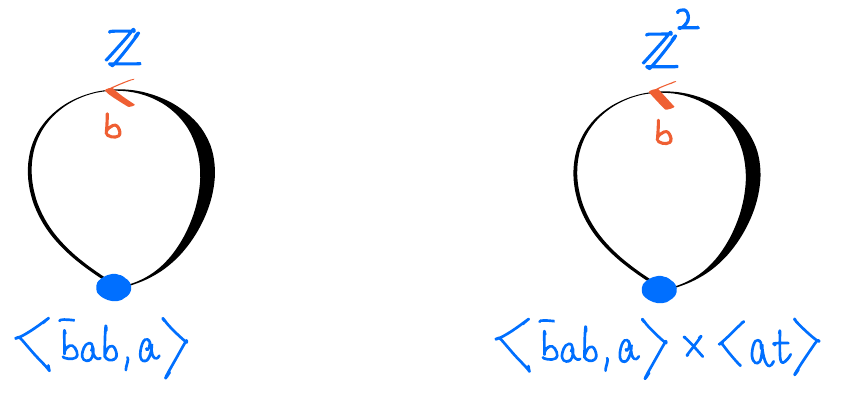}
    \caption{$T_\phi/\bbF$ and $T_\phi/\bbG$ }
    \label{fig:Example1}
\end{figure}
\end{example}

\section {Finite index rigidity}
In this section we prove that a polynomial free-by-cyclic group $\bbG$ is finite index rigid, first for $\deg(\bbG) = 1$ and then for $\deg(\bbG) \geq 1$. 

\subsection{Finite index rigidity for linear degree} \label{sec:fir linear}

 In this subsection, $\bbG = \bbF \rtimes_\phi \bbZ$ is a polynomial free-by-cyclic group with $\deg(\bbG) = 1$. We use the methods of Wang--Wu \cite{wangwu} and Yu--Wang \cite{yuwang} for graph manifolds to prove finite index rigidity for $\bbG$. 

Recall that the tree of cylinders $\bbT$ associated to 
$\bbG$ admits a bipartition with $V(\bbT) = V_0\sqcup V_1$ where stabilizers of vertices in $V_1$ are isomorphic to $\bbZ^2$. We construct a graph $\bfT$ by replacing each star of $v\in V_1$ by a complete graph on its neighbors.
By \ref{T: cylinders linear}, these stars and complete graphs are finite.
Thus, $V(\bfT) = V_0$ and two vertices are connected by an edge if they are connected to the same vertex in $V_1$. 
The edge stabilizers in $\bfT$ are again maximal $\bbZ^2$ and it is canonical as in \Cref{canonical tree}.

Let $\Gamma(\bbG) = \bfT/\bbG$. By Property \ref{T: vertex groups linear} of \Cref{canonical tree}, for each vertex $v\in \bfT/\bbG$ the vertex group $G_{v}$  is of the form $F \times \gen{y t}$ for some non-abelian subgroup $F\le \bbF$ and $y\in \bbF$. 
Define $\bfx(v)=\chi(F)=1-\rank(F)$.

We assign a number $\Delta(e)$ for an edge $e$ in $\Gamma(\bbG)$ as follows:
Let $\tild e$ be a lift of $e$ to $\bfT$, and let $v_1,v_2$ be its endpoints.
Then $G_{v_i}$ is of the form $F_i \times \gen{y_i t}$ for some (non-abelian) subgroup $F_i \le \bbF$ and $y_i\in \bbF$.
Note that $Z(G_{v_i}) = \gen{y_i t}$.
The subgroup $G_{\tild e}$ intersects $Z(G_{v_i})$ in an infinite cyclic subgroup $Z_i = G_{\tild e} \cap Z(G_{v_i})$.

\begin{lemma}
    The cyclic subgroups $Z_1,Z_2$ are spanned by independent vectors in $G_{\tild e}$.
\end{lemma}
\begin{proof}
Assume for the sake of contradiction that $Z_1,Z_2$ are dependent. 
Since $G_{\tild{e}}$ is a maximal $\bbZ^2$ subgroup of $\bbG$, we have $Z_i = \gen{y_i t}$, which implies that $Z_1=Z_2$. 
But then $G_{v_1}=G_{v_2}$ since they are the centralizers of $Z_1,Z_2$ respectively.
However, this shows that $G_{\tild e}$ contains $G_{v_1}$ which contradicts that $G_{\tild e}\simeq \bbZ^2$. 
\end{proof}

\begin{definition}[Intersection number]
Let $Z_1,Z_2\le\bbZ^2$ be two cyclic subgroups spanned by independent vectors. Define their intersection number $i(Z_1,Z_2) =  [\bbZ^2:(Z_1+Z_2)]$, or as a determinant with respect to a fixed basis.  
Equivalently, $i(Z_1,Z_2)$ is the (geometric) intersection number of the curves corresponding to $Z_1,Z_2$ when thinking of $\bbZ^2$ as the fundamental group of a torus.
\end{definition}

For an edge $e$ of $\Gamma (\bbG)$, define $\Delta(e)=i(Z_1,Z_2)$ and set the edge weights $$\alpha(e) = \frac{1}{\Delta(e)\bfx(v_1)\bfx(v_2)}.$$ 
Similarly, for a finite index subgroup $H < \bbG$, define $\Gamma(H) = \bfT/H$ and weights on its edges as above. There is a natural quotient map $p \colon \Gamma(H) \to \Gamma(\bbG)$. 

\begin{lemma}\label{isomorphic covers}
    If $\eta \colon H\to H'$ is an isomorphism between finite index subgroups, then the $\eta$-equivariant map $\Theta\colon \bbT\to \bbT$ of \Cref{canonical tree} induces a weight-preserving graph isomorphism $\bar \Theta\colon \Gamma(H)\to \Gamma(H')$. \hfill  \qedsymbol
\end{lemma}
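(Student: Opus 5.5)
The plan is to apply \ref{T: commensurability invariance} of \Cref{canonical tree} to $\eta$, which gives an $\eta$-equivariant isomorphism $\Theta\colon \bbT\to\bbT$. Everything else follows from equivariance and the fact that all the data defining $\Gamma(H)$ and its weights is expressed group-theoretically in terms of stabilizers.

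\emph{Step 1: pass to $\bfT$.} Since $\Theta$ is a tree isomorphism, it preserves the bipartition $V_0\sqcup V_1$. Indeed, $V_1$ vertices have $\bbZ^2$ stabilizers in $H$ (finite index in $\bbZ^2$, so still $\bbZ^2$), while $V_0$ stabilizers in $H$ are finite index in $F\times\gen{yt}$ with $F$ non-abelian, so they are not abelian. Equivariance gives $\Theta(hx)=\eta(h)\Theta(x)$, hence $\eta(H_x)=H'_{\Theta(x)}$, so $\Theta$ sends $V_1$ to $V_1$. It therefore sends stars of $V_1$-vertices to stars, and induces an $\eta$-equivariant isomorphism of $\bfT$. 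Quotienting, we get a graph isomorphism $\bar\Theta\colon \bfT/H\to\bfT/H'$ with $\eta(H_{\tilde v})=H'_{\Theta\tilde v}$ and $\eta(H_{\tilde e})=H'_{\Theta\tilde e}$.

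\emph{Step 2: preserve $\bfx$.} The weights on $\Gamma(H)$ are computed from the $H$-stabilizers. For a vertex, $H_{\tilde v}$ is finite index in $F\times\gen{yt}$, so it is of the form $F'\times Z$ up to finite index, with $Z$ its center. Its value $\bfx$ is $\chi$ of the free factor, which equals $\chi(H_{\tilde v}/Z(H_{\tilde v}))$ and is a group invariant. Concretely, $H_{\tilde v}\cap \bbF$ could also be used, but to be intrinsic I will use the rational Euler characteristic of the quotient by the center, or equivalently the multiplicative invariant $\chi$ of the free part. This must be checked to be well defined: the center of a finite index subgroup of $F\times\bbZ$ with $F$ non-abelian free is its intersection with $\bbZ$, and the quotient is virtually free with well-defined $\chi$. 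Since $\eta$ carries $H_{\tilde v}$ isomorphically to $H'_{\Theta\tilde v}$, we get $\bfx(v)=\bfx(\bar\Theta v)$.

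\emph{Step 3: preserve $\Delta$.} For an edge $\tilde e$ with endpoints $\tilde v_1,\tilde v_2$, define $Z_i=H_{\tilde e}\cap Z(H_{\tilde v_i})$ and $\Delta=[H_{\tilde e}:Z_1+Z_2]$. The isomorphism $\eta$ restricts to $H_{\tilde e}\to H'_{\Theta\tilde e}$ and maps centers to centers. Hence it maps $Z_i$ to $Z_i'$, and the index is preserved. It follows that $\alpha(e)=\alpha(\bar\Theta e)$.

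\emph{Main obstacle.} The only subtle point is making sure the weights on $\Gamma(H)$ are defined intrinsically from $H$-stabilizers, rather than from the ambient $\bbG$-data in \ref{T: vertex groups linear}. I would state explicitly that $\bfx$ and $\Delta$ for $\Gamma(H)$ are computed using $H_{\tilde v}\cong F'\times\gen{z}$ with $F'$ free. This holds because a finite index subgroup of $F\times\bbZ$ has this form after possibly replacing the generator; one may need to pass to a normal subgroup, but the center and the quotient suffice without doing so. Once this is in place, every ingredient is an isomorphism invariant of the pair (vertex group, edge group), and equivariance finishes the proof.
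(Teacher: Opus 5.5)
Your proof is correct and is the argument the paper leaves implicit (the lemma is stated without proof): apply \ref{T: commensurability invariance}, use $\eta(H_x)=H'_{\Theta(x)}$ to see that $\Theta$ preserves $V_1$ and so descends to $\bfT/H\to\bfT/H'$, and note that $\bfx$ and $\Delta$ are built from stabilizers, their centers and indices, all of which $\eta$ carries over. Insisting on the intrinsic definition $\bfx=\chi(H_{\tilde v}/Z(H_{\tilde v}))$ is right, but drop the aside that $H_{\tilde v}\cap\bbF$ ``could also be used'': take $F_2=\gen{x,y}$ and let $K\le F_2\times\gen{z}$ be the index-two subgroup of pairs $(w,z^n)$ for which $n$ plus the $x$-exponent sum of $w$ is even; then $K\cong F_2\times\bbZ$ and $\chi(K/Z(K))=-1$, but $\chi(K\cap F_2)=-2$, so the fibre-based quantity is not an isomorphism invariant of the vertex group.
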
 

Here is a simple example to illustrate the intersection number.

\begin{example}
Recall the setup from \Cref{eg:limiting tree}. 
Note that $\bfT$ is isomorphic to $T_\phi$ in this example. 
Let $A < \bbF$ be an index 2 subgroup given by $\gen{ a^2,b, aba\ii}$. 
In \Cref{fig:Example2}, the left figure shows $\bfT/A$.

   Note that $A$ is not $\Phi$-invariant but it is $\Phi^2$-invariant. 
Therefore, the subgroup $H=\gen{A,t^4} <\bbG$ has index 8 in $\bbG$ and has the form 
$H = A \rtimes_{\Phi^4|_A} \bbZ$ with monodromy represented by $t^4$. In \Cref{fig:Example2}, the right figure shows $\bfT/H$ with vertex $\hat v$ and edge $\hat e$. The 
graph of groups inclusion maps for $\hat e$ are $\bbZ^2 \to \gen{b\ii a^2b, a^4t^4}$ and $\bbZ^2 \to \gen{a^2, t^4}$. Let $\tild e$ be a lift of  $\hat{e}$ with stabilizer $\gen{a^2, t^4}$ and with end points $v_1, v_2$. Then $Z_1 = \gen{a^4t^4}$ and $Z_2 = \gen{t^4}$.  Using $e,v$ to denote the edge and vertex of $\bfT/\bbG$ shown in \Cref{fig:Example1}, we get

\begin{align*}
     \Delta(e) & = [\gen{a,t} : \gen{t}+ \gen{at} ] = 1  & \bfx(v)&=-1 & \alpha(e)&=\frac{1}{\Delta(e)\bfx(v)^2}=1\\
     \Delta(\hat e) & = [\gen{a^2,t^4} : \gen{t^4}+ \gen{a^4 t^4} ] = 2 & \bfx(v')&=-2  & \alpha(\hat e)&=\frac{1}{\Delta(e_1)\bfx(v')^2}= \frac{1}{8}
    \end{align*}     

\begin{figure}[htbp]
  \centering
    \includegraphics[scale=.6]{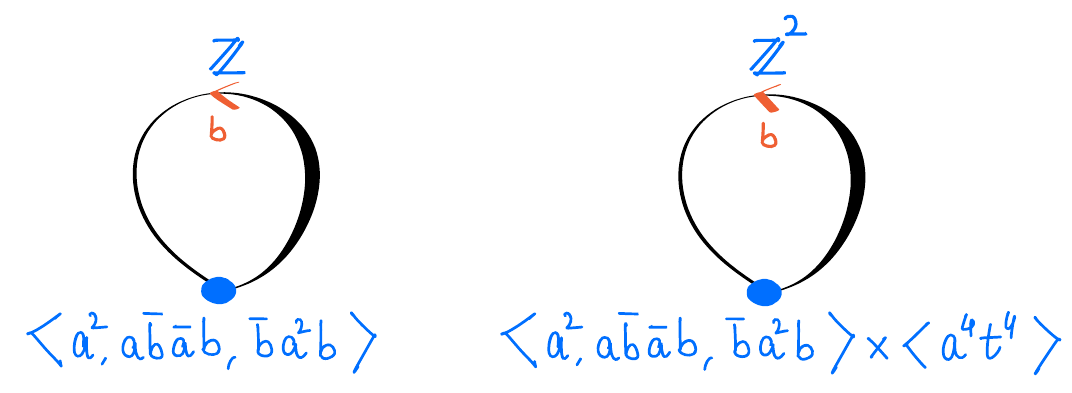}
    \caption{$\bfT/A$ and $\bfT/H$ }
    \label{fig:Example2}
\end{figure}
\end{example}
The following lemma is a rewording of \cite[Proposition 5.1]{wangwu} in group theoretical language. We include a proof as this is the basis of the Wang--Wu and Yu--Wang machinery. 

\begin{lemma}\label{alpha of cover}
    Let $\hat G\normalin \bbG$ be a normal subgroup of index $d$. Denote by $p\colon \Gamma(\hat G)=\bfT/\hat G\to \bfT/\bbG=\Gamma(\bbG)$ the quotient map. 
    Let $e$ be an edge of $\Gamma(\bbG)$ with endpoints $v_1,v_2$ and let $\hat e$ be an edge of $\Gamma(\hat G)$ with $p(\hat e)=e$. Let $u_i = |p\ii (v_i)|$ and $u_e = |p\ii(e)|$.  Then
    \begin{equation}
        \label{desired equality} d u_e\alpha(\hat e) = \alpha(e)u_1u_2.
    \end{equation}
\end{lemma}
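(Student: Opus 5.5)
The plan is to express all three quantities $u_1,u_2,u_e$ and the weight $\alpha(\hat e)$ in terms of indices of stabilizers in a single lift, and then compare. Fix a lift $\tild e\in\bfT$ of $\hat e$ (hence of $e$) with endpoints $\tild v_1,\tild v_2$. Write $G_{\tild v_i}=F_i\times\gen{z_i}$ with $z_i=y_it$, so $Z(G_{\tild v_i})=\gen{z_i}=Z_i\le G_{\tild e}$. The $\hat G$-stabilizers are $\hat G_x=\hat G\cap G_x$.

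\emph{Step 1 (counting lifts).} Apply \Cref{sum of indices} to the transitive action of $\bbG$ on the orbit $\bbG\tild v_i$, with the finite index subgroup $\hat G$. Since $\hat G$ is normal, the $\hat G$-stabilizers of the various lifts are $\bbG$-conjugate, so all the summands $[G_x:\hat G_x]$ are equal. This gives
$$d=u_i\,[G_{\tild v_i}:\hat G_{\tild v_i}],\qquad d=u_e\,[G_{\tild e}:\hat G_{\tild e}].$$

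\emph{Step 2 (vertex groups).} Set $K=\hat G_{\tild v_i}$, which has finite index in $F_i\times\gen{z_i}$. I claim $Z(K)=K\cap\gen{z_i}$. The inclusion $\supseteq$ is clear. For $\subseteq$, the projection of $K$ to $F_i$ has finite index in a non-abelian free group, so its center is trivial. Let $\pi_i(K)$ be that projection and set
$$f_i=[F_i:\pi_i(K)],\qquad c_i=[\gen{z_i}:K\cap\gen{z_i}].$$
From the exact sequence $1\to K\cap\gen{z_i}\to K\to\pi_i(K)\to1$ we get $[G_{\tild v_i}:K]=f_ic_i$. The free factor $\hat F_i$ in the product decomposition of $K$ from \ref{T: vertex groups linear} is isomorphic to $K/Z(K)\cong\pi_i(K)$. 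Hence
$$\bfx(\hat v_i)=\chi(\pi_i(K))=f_i\,\bfx(v_i).$$
Moreover $\hat Z_i=\hat G_{\tild e}\cap Z(K)=\hat G\cap Z_i$, because $Z_i\le G_{\tild e}$. So $[Z_i:\hat Z_i]=c_i$, and Step 1 gives $d/u_i=f_ic_i$.

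\emph{Step 3 (intersection numbers).} In the lattice $G_{\tild e}\cong\bbZ^2$, the vectors spanning $Z_1,Z_2$ are independent. Therefore $[Z_1+Z_2:\hat Z_1+\hat Z_2]=c_1c_2$, and so $[G_{\tild e}:\hat Z_1+\hat Z_2]=\Delta(e)c_1c_2$. Dividing by $[G_{\tild e}:\hat G_{\tild e}]=d/u_e$ gives
$$\Delta(\hat e)=\Delta(e)\,c_1c_2\,u_e/d.$$

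\emph{Step 4 (combining).} Substituting the previous steps,
$$\alpha(\hat e)=\frac{1}{\Delta(\hat e)f_1f_2\bfx(v_1)\bfx(v_2)}=\frac{d\,\alpha(e)}{u_e\,(c_1f_1)(c_2f_2)}=\frac{\alpha(e)u_1u_2}{d\,u_e},$$
which is \eqref{desired equality}.

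The main point requiring care is Step 2. One must identify the center of $\hat G_{\tild v_i}$ with $\hat G\cap Z(G_{\tild v_i})$, and check that this center lies in the edge group, so that $\hat Z_i$ is exactly the corresponding sub-cyclic group of $Z_i$. One must also check that the Euler characteristic scales by $f_i$ and not by the full index $[G_{\tild v_i}:\hat G_{\tild v_i}]$. This is exactly the splitting of that index into a ``base'' part $f_i$ and a ``fiber'' part $c_i$, and everything else is bookkeeping.
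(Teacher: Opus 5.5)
Your proof is correct and follows the same skeleton as the paper's proof (its equations \eqref{eq1}--\eqref{eq5}). At the key step, however, you split $[G_{\tild v_i}:K]$ differently from the paper, and your splitting is the one that is actually valid.

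\textbf{Where the two proofs differ.} The paper asserts $\hat G_i=\hat F_i\times\hat Z_i$, with $\hat F_i=F_i\cap\hat G_i$ and $\hat Z_i$ the projection of $\hat G_i$ to $Z_i$. It then scales $\bfx$ by $[F_i:\hat F_i]$ and $\Delta$ by $[Z_i:\hat Z_i]$. That decomposition holds only when the images of $F_i$ and $Z_i$ in $\bbG/\hat G$ intersect trivially. Otherwise the projection to $Z_i$ is not even contained in $\hat G$.

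\textbf{An example.} In \Cref{eg:limiting tree}, consider the surjection $\bbG\to\bbZ/5\rtimes\bbZ/4$ sending $a\mapsto -1$ and $t\mapsto 2$ in $\bbZ/5$, and $b$ to a generator of $\bbZ/4$ acting on $\bbZ/5$ by multiplication by $2$. It maps both $F_v=\gen{a,b\ii ab}$ and $\gen{at}$ onto $\bbZ/5$. Let $\hat G$ be its kernel. Then $\hat G_v$ projects onto $F_v$, so $\bfx(\hat v)=\bfx(v)$ even though $[F_v:F_v\cap\hat G_v]=5$. Likewise $Z(\hat G_v)=\gen{(at)^5}$, even though $\hat G_v$ projects onto $\gen{at}$.

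\textbf{Why your version works.} Your pair $f_i=[F_i:\pi_i(K)]$ and $c_i=[Z_i:\hat G\cap Z_i]$ is exactly what governs $\bfx(\hat v_i)=\chi(K/Z(K))$ and $\hat Z_i=Z(K)\cap\hat G_{\tild e}$. So each of your intermediate identities is true. The paper's final formula is still right only because $[F_i:F_i\cap K]\,[Z_i:\pi_{Z_i}(K)]=[G_{\tild v_i}:K]=f_ic_i$, so its two intermediate errors cancel in the product.

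\textbf{Two small points.} First, the inclusion $Z_i=\gen{z_i}\le G_{\tild e}$ needs the maximality in \ref{T: edge groups linear}, as in the paper's independence lemma. Second, do not cite \ref{T: vertex groups linear} for the splitting of $K$. That property is stated for UPG monodromy, and in the example above $\hat G_v$ is not the product of $\hat G_v\cap\bbF$ with a central cyclic subgroup. Instead, note directly that $K\cong\pi_i(K)\times Z(K)$, since a central extension of a free group splits. All you use is that the free factor is isomorphic to $K/Z(K)$, and your argument already establishes this.
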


\begin{proof}
    Let $\tilde e$ be an edge of $\bfT$ which lifts $e$ and $\hat e$, and let $\tilde v_1$ and $\tilde v_2$ be the end points of $\tilde e$. 
    Denote by $G_i = \bbG_{\tilde v_i}$ (resp. $\hat G_i = \hat G_{\tilde v_i}$) the stabilizer of $\tilde v_i$ in $\bbG$ (resp. $\hat G$) . 
    Then, since $\hat G\normalin \bbG$, \Cref{sum of indices} implies
    \begin{equation}\label{eq1}
        d=[\bbG:\hat G]=[G_i:\hat G_i] u_i.
    \end{equation}
    Recall that $G_i\simeq F_i \times Z_i$ where $F_i$ is free of rank at least 2 and $Z_i = Z(G_i)\simeq \bbZ$. 
    Similarly, $\hat G_i=\hat F_i\times \hat  Z_i$ where $\hat F_i = F_i \cap \hat G_i$ and $\hat Z_i$ is the image of $\hat G_i$ under the projection $G_i \epi Z_i$.
    Then, 
    \begin{equation}
        \label{eq2}[G_i:\hat G_i]=[Z_i:\hat Z_i][F_i:\hat F_i]
    \end{equation}
    Since the Euler characteristic is proportional to index, we get for endpoints $\hat v_i$ of $\hat e$,
    \begin{equation}
        \label{eq3}\bfx(\hat v_i)=\chi(\hat F_i) = [F_i:\hat F_i]\chi(F_i) = [F_i:\hat F_i]\bfx(v_i).
    \end{equation}
    Similarly, denote $E = G_{\tilde e}$ and $\hat E = \hat G_{\tilde e}$, the stabilizers of $\tilde e$ in $\bbG$ and $\hat{G}$, respectively.
    Then,
    \begin{equation}
        \label{eq4} d=[E:\hat E] u_e
    \end{equation}
    Finally, since $\Delta(e) = [E:Z_1+Z_2]$ and $\Delta(\hat e) = [\hat E:\hat Z_1+\hat Z_2]$, we have
    \begin{equation}
        \label{eq5} [Z_1:\hat Z_1][Z_2:\hat Z_2]\Delta(e)=[E:\hat Z_1+\hat Z_2]=\Delta(\hat e)[E:\hat E]
    \end{equation}

    Combining \Cref{eq1,eq2,eq3,eq4,eq5} gives the desired equality \eqref{desired equality}.
\end{proof}

Finally, we show that a polynomial free-by-cyclic group $\bbG$ of degree 1 is finite index rigid. It is a direct application of the methods of Yu--Wang \cite{yuwang} so we include the setup but omit the proofs. 

\begin{proposition}\label{prop:fir linear}
        Let $\bbF$ be a finite rank non-abelian free group. 
    Let $\phi\in \Out(\bbF)$ be a linearly growing outer automorphism.
    Then the free-by-cyclic group $\bbG = \bbF \rtimes_\phi \bbZ$ is finite index rigid.
\end{proposition}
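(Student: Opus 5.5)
By \Cref{reduction to UPG and normal} (with $d=1$), it suffices to show that $\bbG$ is nFIR whenever $\phi$ is UPG of degree $1$. So fix normal subgroups $N_1,N_2\normalin\bbG$ of indices $d_1,d_2$, and an isomorphism $\eta\colon N_1\to N_2$. By \Cref{isomorphic covers} there is a weight-preserving graph isomorphism $\bar\Theta\colon\Gamma(N_1)\to\Gamma(N_2)$. The plan is to extract from the weighted graph $\Gamma(N)$, for $N\normalin\bbG$, a quantity that is an isomorphism invariant of the weighted graph and scales in a known nontrivial way with $[\bbG:N]$. Comparing this quantity for $N_1$ and $N_2$ will then force $d_1=d_2$.

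\textbf{Rescaling the weights.} The relation \eqref{desired equality} in \Cref{alpha of cover} reads $\alpha(\hat e)=\alpha(e)\,u_1u_2/(d\,u_e)$. This is exactly the transformation rule of Yu--Wang. I would therefore look for positive vertex weights $\lambda(v)$ on $\Gamma(\bbG)$ for which the rescaled edge quantity $\beta(e)=\alpha(e)\lambda(v_1)\lambda(v_2)$ becomes "proportional to index" after summing over fibers. Natural candidates for the scaling data are $u_v$, $u_e$ and the vertex Euler characteristics, since $\bfx(\hat v)=[F:\hat F]\bfx(v)$ and $d=[G_v:\hat G_v]u_v$. Following \cite{yuwang}, I would consider the linear system on $\Gamma(\bbG)$ of the form
\[
\sum_{e\ni v}\alpha(e)\,\bfx(v)\,\bfx(w_e)\,\mu(w_e)=c\,\mu(v),
\]
where $w_e$ is the other endpoint of $e$; this is a Perron--Frobenius type eigenvalue problem for a weighted adjacency matrix. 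Since $\Gamma(\bbG)$ is finite and connected, the positive eigenvalue $c(\bbG)$ is canonical.

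\textbf{Computing how the invariant scales.} Next I would use \eqref{eq1}--\eqref{eq5} to track how this matrix changes under passage to $\Gamma(N)$. For $N\normalin\bbG$ all fibers over a vertex or edge have the same size, so the lifted eigenvector is the pullback, suitably scaled by $[F_v:\hat F_v]$. The aim is to show $c(N)=[\bbG:N]^{k}c(\bbG)$ for some fixed $k\neq 0$. Weight-preservation of $\bar\Theta$ gives $c(N_1)=c(N_2)$, hence $d_1=d_2$.

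\textbf{Main obstacle: the degenerate case.} The hard step is the possibility that the naive invariant vanishes or fails to scale, exactly as for $\omega(M)$ and $\sigma(M)$ in the graph manifold setting. The Perron--Frobenius route is meant to avoid this, since $\alpha(e)>0$ always: $\Delta(e)\ge1$ and $\bfx(v)<0$, so the product $\bfx(v_1)\bfx(v_2)>0$.

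The remaining difficulty is to ensure the scaling exponent is really nonzero. In the worst case I would mimic Yu--Wang's combinatorial argument directly. This means assuming $d_1<d_2$ and comparing the sums $\sum_{\hat e\in p^{-1}(e)}\alpha(\hat e)=\alpha(e)u_1u_2/d$ on both sides. Since $u_i\le d$, one gets a strict inequality somewhere, which contradicts the existence of the weight-preserving isomorphism. Here one must use that $\Gamma(\bbG)$ has at least one edge, which holds because $\deg\phi=1$, so the tree of cylinders is nontrivial.
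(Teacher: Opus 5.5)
\textbf{There is a genuine gap.} Your reduction to the normal, UPG case and your use of \Cref{isomorphic covers} match the paper. The central step fails, however: the eigenvalue $c$ you propose does not scale with the index. No spectral invariant of a weighted adjacency matrix of $\Gamma(N)$ can, because $[\bbG:N]$ is visible only through the \emph{number} of lifts of vertices and edges.

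\textbf{Why $c$ cannot see the index.} First, $\alpha(e)\bfx(v)\bfx(w_e)=1/\Delta(e)$, so the Euler characteristics cancel and your matrix records only reciprocal intersection numbers. For normal $N$ the fibers of $p$ form an equitable partition. By \eqref{eq1}--\eqref{eq5}, the block sums of your lifted matrix are $[G_1:\hat G_1]/([Z_1:\hat Z_1][Z_2:\hat Z_2]\Delta(e))$, in the notation of the proof of \Cref{alpha of cover}. So $c(N)$ depends only on local indices of vertex and edge groups; the fiber sizes, which carry $d$, cancel.

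\textbf{Concrete counterexample.} In \Cref{eg:limiting tree}, $\Gamma(\bbG)$ is one vertex with one loop, with $\bfx=-1$ and $\Delta=1$, so $c(\bbG)=2$ (counting the loop twice, as in $\bfB^\bbG$).
\begin{itemize}
\item The map $\bbG\to\bbZ/m$ with $b\mapsto 1$, $a,t\mapsto 0$ is well defined. It kills every vertex and edge stabilizer of $\bfT$, since these are conjugates of $\gen{a,b\ii ab}\times\gen{at}$ and $\gen{a,t}$. For its kernel $N$, $\Gamma(N)$ is an $m$-cycle with the same local data, so $c(N)=2$ for all $m\ge 2$.
\item By contrast, $N'=\bbF\rtimes\gen{t^m}$ gives a single loop with $\Delta=m$, so $c(N')=2/m$.
\end{itemize}
Hence there is no fixed $k\neq 0$ with $c(N)=[\bbG:N]^{k}c(\bbG)$.

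\textbf{The fallback also fails.} $\bar\Theta$ need not intertwine the two projections, so $p_1\ii(e)$ and $\bar\Theta\ii(p_2\ii(e))$ are unrelated sets. The fiber sizes $u_i$ are unknown and differ between the two covers, and the bound $u_i\le d$ yields no contradiction.

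\textbf{The missing idea.} You need Yu--Wang's comparison of two covers, not an invariant of a single cover.
\begin{itemize}
\item The paper averages $\bfB^H$ over pairs of $\Aut(\Gamma(H))$-orbits with normalization $1/(|a||b|)$. This normalization is exactly where the count of lifts, hence the index, enters.
\item The resulting $\bfH^H$ depends only on the weighted graph, so $\bfH^H=\bfH^{H'}$. In the example, $\bfH^N=\bfH^{N'}=2/m$, which does see $m$.
\item The fibers of $p$ are not intrinsic, but for normal $H$ they lie inside orbits. The bookkeeping matrices $\bfX^H,\bfW^H$ convert \eqref{desired equality} into $\bfX^H\bfB^\bbG=[\bbG:H]\,\bfH^H\bfW^H$.
\item Comparing this identity for $H$ and $H'$ with a Perron--Frobenius argument forces $[\bbG:H]=[\bbG:H']$.
\end{itemize}
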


\begin{proof}
    By \Cref{reduction to UPG and normal}, it suffices to show that $\bbG$ is normal finite index rigid when $\phi$ is a linear UPG outer automorphism.
    Let $H$ be a normal finite index subgroup of $\bbG$. 
    Define a non-negative real-valued symmetric matrix $\bfB^\bbG$ indexed by $V(\Gamma(\bbG))^2$ to be
    \[\bfB^\bbG_{u,v} = \begin{cases}
        \sum_{e} \alpha(e) &u\ne v\\
        2\sum_{e} \alpha(e) &u= v
    \end{cases} \]
    where the sum runs over all edges $e$ with endpoints $u,v$. Similarly, one can define a matrix $\bfB^H$ using $\Gamma(H)$. However, there is some symmetry in $\bfB^H$ and we will define a new matrix $\bfH^H$ obtained from $\bfB^H$ by an averaging process. 
    
    Let $\Aut(\Gamma(H))$ be the group of automorphisms of $\Gamma(H)$ preserving the weights.
    Consider $O(H) = V(\Gamma(H))/\Aut(\Gamma(H))$ and denote by $|a|$ the number of vertices in an orbit $a$. The symmetry in $B^H$ is as follows (see \cite[Theorem 2]{yuwang}): for two orbits $a, b \in O(H)$, the interaction of two vertices $p \in a$ and $q \in a$ with the orbit $b$ is exactly the same in the sense that 
    \[ \sum_{s \in b} \bfB^H_{p,s} = \sum_{s \in b} \bfB^H_{q,s}.\]
    
    Motivated by this symmetry, define a non-negative real-valued symmetric matrix $\bfH^H$ indexed by $O(H)^2$ to be
\[\bfH^H_{a,b}=\frac{1}{|a||b|}\sum_{u\in a} \sum_{v\in b} \bfB^H_{u,v} \]

    Let $p\colon \Gamma(H)\to \Gamma(\bbG)$ be the quotient map. 
    For $a \in O(H)$ and $u \in V(\Gamma(\bbG))$, define \[\bfX_{a,u} ^H= \begin{cases} |p\ii(u)|/|a| & p\ii(u) \subseteq a\\ 0 &p\ii(u) \nsubseteq a\end{cases} \quad \text{ and }\quad \bfW^H_{a,u} =\begin{cases} 1 &p\ii(u) \subseteq a\\ 0 &p\ii(u) \nsubseteq a\end{cases},  \]
    where $\bfX^H$ captures normalized sizes of fibers under the map $p$ and $\bfW^H$ records orbit membership. 
    Then, as in the proof of  \cite[Theorem 3]{yuwang}, the equation from \Cref{alpha of cover} that captures the information of the quotient map $p$, can be converted into the matrix equation 
        \[\bfX^H   \bfB^\bbG=[\bbG:H] \, \bfH^H  \bfW^H.\]

    Now let $H,H'$ be normal finite index subgroups of $\bbG$, and let $\eta\colon H\to H'$ be an isomorphism.
    By \Cref{isomorphic covers}, there is a weight-preserving graph isomorphism $\bar \Theta\colon \Gamma(H)\to \Gamma(H')$.
    Therefore, $\bfH^H$ and $\bfH^{H'}$ are the same up to re-indexing given by $\bar \Theta$. In other words, $\bfH^H$ is independent of the quotient maps $p$. However, note that both $\bfX^H$ and $\bfW^H$ depend on $p$.  
    As in the proof of \cite[Theorem 4]{yuwang}, by manipulating the two equations
  \begin{align*}
       \bfX^H   \bfB^\bbG & =[\bbG:H] \, \bfH^H\bfW^H \\ 
       \bfX^{H'}   \bfB^\bbG & =[\bbG:H'] \, \bfH^{H'} \bfW^{H'}
   \end{align*} 
    and an application of Perron--Frobenius theorem imply that $[\bbG:H]=[\bbG:H']$.
\end{proof}

\subsection{Finite index rigidity in higher degree}
\label{sec:higher degree}
 In this subsection, $\bbG = \bbF \rtimes_\phi \bbZ$ is a polynomial free-by-cyclic group with $\deg(\bbG) \geq 1$. 
 Theorem \ref{linearly growing are fir} follows from the following:

\begin{proposition}\label{prop: multiplicative function for free-by-cyclic}
Let $d\ge 1$, and let $\calG_d$ be the collection of all polynomial free-by-cyclic groups of degree $d$. There exists a multiplicative function $\xi_d \colon \calG_d \to (0,\infty)$.
\end{proposition}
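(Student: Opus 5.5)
We argue by induction on $d\ge 1$, building $\xi_d$ from the canonical tree of \Cref{canonical tree} and the generalized Euler characteristic of \Cref{cor:gen Euler char}.

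\emph{Base case $d=1$.} By \Cref{prop:fir linear}, every group in $\calG_1$ is finite index rigid. Apply \Cref{fir to multiplicative} to $\calG_1$, which is closed under finite index subgroups. This gives a multiplicative function $\xi_1\colon\calG_1\to\bbRnn$ that is positive on every FIR group, hence on all of $\calG_1$.

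\emph{Inductive step $d\ge 2$.} Assume $\xi_{d-1}\colon\calG_{d-1}\to(0,\infty)$ exists. Let $\calG_{<d}$ be the set of all polynomial free-by-cyclic groups of degree at most $d-1$, including the degenerate $\bbZ$ and $\bbZ^2$. Define $\xi\colon\calG_{<d}\to\bbRnn$ by $\xi=\xi_{d-1}$ on $\calG_{d-1}$ and $\xi=0$ on groups of degree $<d-1$. This is multiplicative for two reasons. Degree is an isomorphism invariant (by \cite{Macura}, or since it is a QI invariant). Degree is also preserved under passing to finite index subgroups, so each degree class is closed under finite index subgroups.

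\emph{Definition on UPG groups.} For $\bbG\in\calG_d$ with UPG monodromy, set
\[\xi_d(\bbG)=\chi_\xi(\bbG\actson V(\bbT_\bbG))=\sum_{\bar v\in V(\bbT_\bbG)/\bbG}\xi(\bbG_{\bar v}).\]
The following checks are needed.
\begin{itemize}
\item[(a)] \emph{Well-definedness.} The quotient $\bbT/\bbG$ is finite, since the action is minimal and $\bbG$ is finitely generated. Vertex stabilizers lie in $\calG_{<d}$ by \ref{T: vertex groups}.
\item[(b)] \emph{Positivity.} At least one vertex group has degree exactly $d-1$ by \ref{T: vertex groups}. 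That term contributes $\xi_{d-1}>0$, and all other terms are $\ge 0$.
\item[(c)] \emph{Proportionality to index.} For $H\le\bbG$ of finite index, \ref{T: finite index} identifies $\bbT_H$ with $\bbT_\bbG$ as $H$-trees. Then \Cref{cor:gen Euler char} gives $\xi_d(H)=[\bbG:H]\xi_d(\bbG)$.
\item[(d)] \emph{Isomorphism invariance.} An isomorphism $\eta\colon H\to H'$ between finite index subgroups of $\bbG$ induces an $\eta$-equivariant tree isomorphism by \ref{T: commensurability invariance}. This isomorphism carries stabilizers to isomorphic stabilizers, so $\chi_\xi$ agrees.
\end{itemize}

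\emph{Extension to all of $\calG_d$.} Invariance in (d) is only established for subgroups of a common $\bbG$, and non-UPG groups are not yet covered. To handle this, we define $\xi_d$ on abstract commensurability classes, in the spirit of \Cref{fir to multiplicative}. For arbitrary $G\in\calG_d$, choose a finite index UPG subgroup $K\le G$, which exists by \cite[Proposition 3.5]{BFH:TitsII}, and set
\[\xi_d(G)=\xi_d(K)/[G:K].\]
Independence of $K$: two choices $K,K'$ share a finite index UPG subgroup, namely the UPG monodromy property passes to finite index subgroups, so $K\cap K'$ works. Then (c) shows the two definitions agree.

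Invariance under an abstract isomorphism $f\colon G\to G'$: $f$ maps a UPG finite index subgroup $K\le G$ to the UPG subgroup $f(K)\le G'$ of the same index. So it suffices to know that $\xi_d(K)=\xi_d(f(K))$ for isomorphic UPG groups $K\cong f(K)$. This holds because the tree $\bbT_K$ is canonically attached to $K$: it is the collapsed tree of cylinders of a JSJ-type deformation space, and $\eta$-equivariance in \ref{T: commensurability invariance} comes from exactly this canonicity via \cite[Corollary 4.10]{guirardel2011trees}. An isomorphism therefore transports the tree equivariantly. Proportionality to index for general finite index pairs $H\le G$ follows by passing to a common UPG subgroup.

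\emph{Main obstacle.} The delicate point is the isomorphism invariance for abstract isomorphisms, not just those inside a fixed $\bbG$. This means confirming that \ref{T: commensurability invariance}, or its proof, applies to any isomorphism $K\to K'$ of UPG groups, with $\bbT$ determined intrinsically by the group structure. A secondary point to check is that degree is preserved under finite index subgroups, so that $\xi$ is genuinely multiplicative on $\calG_{<d}$.
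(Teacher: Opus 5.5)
Your proposal is essentially the paper's proof. The base case comes from \Cref{prop:fir linear} and \Cref{fir to multiplicative}, and the inductive step sets $\xi_d(\bbG)=\chi_\xi(\bbG\actson\bbT_\bbG)$. Positivity comes from the vertex group of degree $d-1$, proportionality from \ref{T: finite index} with \Cref{cor:gen Euler char}, and invariance from the canonicity behind \ref{T: commensurability invariance}. The differences are cosmetic. The paper keeps all the positive $\xi_i$ on lower degrees, not just $\xi_{d-1}$. It also applies the canonical tree directly to every group of the relevant degree, skipping your define-on-UPG-then-extend layer.

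One claim in that extra layer is false, though easily repaired: UPG monodromy does not pass to finite index subgroups. In \Cref{eg:limiting tree}, take $F'=\ker(\bbF\to\bbZ/2,\ a\mapsto 0,\ b\mapsto 1)$, which is $\Phi$-invariant with basis $a,\,bab\ii,\,b^2$. The index-$2$ subgroup $\gen{F',bt}$ has monodromy $\mathrm{ad}_b\circ\Phi|_{F'}$. This swaps the classes of $a$ and $bab\ii$ in $H_1(F')$, so it has eigenvalue $-1$ and is not unipotent.

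So $K\cap K'$ need not be UPG. Instead, use \cite[Proposition 3.5]{BFH:TitsII}, or pass to a power of the monodromy of $K\cap K'$ induced from $G$, to get a finite index UPG subgroup $L\le K\cap K'$. Then compare $\xi_d(K)$ and $\xi_d(K')$ through $L$ using (c).
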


Note that $\calG_d$ is closed under taking finite index subgroups.

\begin{proof}
We prove the proposition by induction on $d$.
Every polynomial free-by-cyclic group $\bbG$ with $\deg(\bbG) =1$ is finite index rigid by \Cref{prop:fir linear} and hence, by \Cref{fir to multiplicative}, there exists a multiplicative function $\xi_1\colon \calG_1\to (0,\infty)$.

Suppose for every $1 \leq i \leq d$ we have a multiplicative function $\xi_i\colon \calG_i\to (0,\infty)$. 
Let $\calG_0$ be the collection of all polynomial free-by-cyclic groups of degree $0$ (including degenerate free-by-cyclic groups such as $\bbZ$ and $\bbZ^2$).
Let $\xi_0 \colon \calG_0\to [0,\infty)$ be the constant $0$ function and let $\xi \colon \bigcup_{i=0}^d\calG_i\to [0,\infty)$ be defined by $\xi = \bigcup _{i=0}^d\xi_i$. 

Let $\bbG$ be a polynomial free-by-cyclic group with $\deg(\bbG) = d+1>1$. 
Let $\bbG\actson \bbT_\bbG$ be the canonical tree given by \Cref{canonical tree}. 
By \ref{T: vertex groups}, all vertex groups are in $\bigcup_{i=0}^d\calG_i$ therefore $\xi(G_v)$ is defined for every vertex group $G_v$.
Moreover, at least one vertex group $G_{v_0}$ is contained in $\calG_d$ which implies $\xi(G_{v_0})>0$. 
Therefore,
\[\chi_\xi(\bbG \actson \bbT_\bbG) = \sum_{v \in \bbT/\bbG} \xi(G_v) >0.\]

We define $\xi_{d+1}(\bbG):= \chi_\xi(\bbG \actson \bbT_\bbG) >0$. Since $\bbT_\bbG$ is canonical for $\bbG$ (in the sense of \ref{T: commensurability invariance}), $\xi_{d+1}(\bbG)$ is an isomorphism invariant.  
Moreover, by \ref{T: finite index} if $H$ is a finite index subgroup of $\bbG$, then $H \actson \bbT_H$ is equivariantly isomorphic to $H\actson \bbT_\bbG$. By \Cref{cor:gen Euler char}, $$\xi_{d+1}(H) = \chi_\xi(H\actson \bbT_H) = \chi_\xi(H\actson \bbT_\bbG) = [\bbG:H]\chi_\xi(\bbG \actson \bbT_\bbG) = [\bbG:H]\xi_{d+1}(\bbG).$$ 
Thus, $\xi_{d+1}$ is a non-zero multiplicative function on $\calG_{d+1}$. 
\end{proof}

\bibliographystyle{alpha}
\bibliography{biblio}
\end{document}

%% file: preamble.tex
\usepackage[leqno]{amsmath}
\usepackage{amssymb,amsfonts,xfrac,MnSymbol}
\usepackage{amsthm}
\usepackage{thmtools} 
\usepackage{cite}
\usepackage{hyperref}
\usepackage{todonotes}
\usepackage{enumitem}
\usetikzlibrary{calc}
\usepackage{graphicx}
\usepackage{tikz-cd, tikz}
\usepackage{color}
\usepackage{import}
\usepackage{cleveref}

\numberwithin{equation}{section}

\usepackage{marginnote}
\usepackage{todonotes}

\newtheorem{theorem}{Theorem}[section]

\newtheorem{proposition}[theorem]{Proposition}
\newtheorem{lemma}[theorem]{Lemma}
\newtheorem{corollary}[theorem]{Corollary}

\newtheorem*{theorem*}{Theorem}
\newtheorem*{claim*}{Claim}
\newtheorem*{proposition*}{Proposition}
\newtheorem*{lemma*}{Lemma}
\newtheorem*{corollary*}{Corollary}

\newtheorem{theoremA}{Theorem}

\theoremstyle{definition}
\newtheorem{definition}[theorem]{Definition}

\newtheorem{remark}[theorem]{Remark}
\newtheorem{example}[theorem]{Example}

\newtheorem*{definition*}{Definition}
\newtheorem*{observation*}{Observation}
\newtheorem*{remark*}{Remark}
\newtheorem*{example*}{Example}
\newtheorem*{question*}{Question}
\newtheorem*{exercise*}{Exercise}
\newtheorem*{fact*}{Fact}
\newtheorem*{notation*}{Notation}

\newcommand{\bbF}{\mathbb{F}}
\newcommand{\bbG}{\mathbb{G}}

\newcommand{\bbN}{\mathbb{N}}

\newcommand{\bbR}{\mathbb{R}}

\newcommand{\bbT}{\mathbb{T}}

\newcommand{\bbZ}{\mathbb{Z}}

\newcommand{\bfB}{\mathbf{B}}

\newcommand{\bfH}{\mathbf{H}}

\newcommand{\bfT}{\mathbf{T}}

\newcommand{\bfW}{\mathbf{W}}
\newcommand{\bfX}{\mathbf{X}}

\newcommand{\bfx}{\mathbf{x}}

\newcommand{\calE}{\mathcal{E}}

\newcommand{\calG}{\mathcal{G}}

\newcommand{\bbRnn}{\mathbb{R}_{\ge0}}

\newcommand{\cl}[1]{\overline{#1}}
\newcommand{\actson}{\curvearrowright}

\newcommand{\epi}{\twoheadrightarrow}

\newcommand{\normalin}{\lhd}

\newcommand{\ii}{^{-1}}

\newcommand{\gen}[1]{\left< #1 \right>}

\newcommand{\tild}[1]{\widetilde{#1}}

\DeclareMathOperator{\Aut}{Aut}

\DeclareMathOperator{\Out}{Out}

\DeclareMathOperator{\rank}{rank}

%% file: biblio.bib
@article {BFH:TitsII,
    AUTHOR = {Bestvina, Mladen and Feighn, Mark and Handel, Michael},
     TITLE = {The {T}its alternative for {${\rm Out}(F_n)$}. {II}. {A}
              {K}olchin type theorem},
   JOURNAL = {Ann. of Math. (2)},
  FJOURNAL = {Annals of Mathematics. Second Series},
    VOLUME = {161},
      YEAR = {2005},
    NUMBER = {1},
     PAGES = {1--59},
      ISSN = {0003-486X,1939-8980},
   MRCLASS = {20E36 (20E05 57M07)},
  MRNUMBER = {2150382},
MRREVIEWER = {S.\ Andreadakis},
       DOI = {10.4007/annals.2005.161.1},
       URL = {https://doi.org/10.4007/annals.2005.161.1},
}

@article {BestvinaHandel,
    AUTHOR = {Bestvina, Mladen and Handel, Michael},
     TITLE = {Train tracks and automorphisms of free groups},
   JOURNAL = {Ann. of Math. (2)},
  FJOURNAL = {Annals of Mathematics. Second Series},
    VOLUME = {135},
      YEAR = {1992},
    NUMBER = {1},
     PAGES = {1--51},
      ISSN = {0003-486X,1939-8980},
   MRCLASS = {20E05 (20F28 20F32 20F34 57M07 57M15)},
  MRNUMBER = {1147956},
MRREVIEWER = {Darryl\ McCullough},
       DOI = {10.2307/2946562},
       URL = {https://doi.org/10.2307/2946562},
}

@article{andrew2022free,
  title={Free-by-cyclic groups, automorphisms and actions on nearly canonical trees},
  author={Andrew, Naomi and Martino, Armando},
  journal={Journal of Algebra},
  volume={604},
  pages={451--495},
  year={2022},
  publisher={Elsevier}
}

@article{andrewguerchhorbezwade,
  title={Finiteness properties of centralisers in {$\text{Out}(F_n)$} and automorphisms of free-by-cyclic groups},
  author={Andrew, Naomi and Guerch, Yassine and Horbez, Camille and Wade, Richard D.},
  journal={in preparation},
  year={}
}

@article{yuwang,
  title={Covering degrees are determined by graph manifolds involved},
  author={Wang, Shicheng and Yu, Fengchun},
  journal={Commentarii Mathematici Helvetici},
  volume={74},
  number={2},
  pages={238--247},
  year={1999}
}

@article{sykiotis2018complexity,
  title={Complexity volumes of splittable groups},
  author={Sykiotis, Mihalis},
  journal={Journal of Algebra},
  volume={503},
  pages={409--432},
  year={2018},
  publisher={Elsevier}
}

@article {cohen1995verysmall,
    AUTHOR = {Cohen, Marshall M. and Lustig, Martin},
     TITLE = {Very small group actions on {${\bf R}$}-trees and {D}ehn twist
              automorphisms},
   JOURNAL = {Topology},
  FJOURNAL = {Topology. An International Journal of Mathematics},
    VOLUME = {34},
      YEAR = {1995},
    NUMBER = {3},
     PAGES = {575--617},
      ISSN = {0040-9383},
   MRCLASS = {20F32 (20E05 20E08 20F28)},
  MRNUMBER = {1341810},
MRREVIEWER = {Gilbert\ Levitt},
       DOI = {10.1016/0040-9383(94)00038-M},
       URL = {https://doi.org/10.1016/0040-9383(94)00038-M},
}

@article {Dahmani-Li,
    AUTHOR = {Dahmani, Fran\c{c}ois and Li, Ruoyu},
     TITLE = {Relative hyperbolicity for automorphisms of free products and
              free groups},
   JOURNAL = {J. Topol. Anal.},
  FJOURNAL = {Journal of Topology and Analysis},
    VOLUME = {14},
      YEAR = {2022},
    NUMBER = {1},
     PAGES = {55--92},
      ISSN = {1793-5253,1793-7167},
   MRCLASS = {20F67 (20F34 20F65)},
  MRNUMBER = {4411100},
MRREVIEWER = {Thomas\ Koberda},
       DOI = {10.1142/S1793525321500011},
       URL = {https://doi.org/10.1142/S1793525321500011},
}

@article {Ghosh,
    AUTHOR = {Ghosh, Pritam},
     TITLE = {Relative hyperbolicity of free-by-cyclic extensions},
   JOURNAL = {Compos. Math.},
  FJOURNAL = {Compositio Mathematica},
    VOLUME = {159},
      YEAR = {2023},
    NUMBER = {1},
     PAGES = {153--183},
      ISSN = {0010-437X,1570-5846},
   MRCLASS = {20F65 (20E05 20F67 57M07)},
  MRNUMBER = {4541452},
MRREVIEWER = {Anthony\ Genevois},
       DOI = {10.1112/S0010437X22007813},
       URL = {https://doi.org/10.1112/S0010437X22007813},
}

@article{guirardel2011trees,
  title={Trees of cylinders and canonical splittings},
  author={Guirardel, Vincent and Levitt, Gilbert},
  journal={Geometry \& Topology},
  volume={15},
  number={2},
  pages={977--1012},
  year={2011},
  publisher={Mathematical Sciences Publishers}
}

@article {lazarovichfir,
    AUTHOR = {Lazarovich, Nir},
     TITLE = {Finite index rigidity of hyperbolic groups},
   JOURNAL = {J. Eur. Math. Soc.},
  FJOURNAL = {Journal of European Mathematical Society},
      YEAR = {2025},
       DOI = {10.4171/JEMS/1618},
}

@article {lazarovichrahamimsisto,
 author = {Lazarovich, Nir and Rahamim, Gon and Sisto, Alessandro},
journal = {{P}reprint},
  title = {Finite Index Rigidity of Relatively Hyperbolic Groups},
   year = {2025},
   note = {\href{https://arxiv.org/abs/2509.04323}{arxiv:2509.04323}}
}

@incollection {lueckewu,
    AUTHOR = {Luecke, John and Wu, Ying-Qing},
     TITLE = {Relative {E}uler number and finite covers of graph manifolds},
 BOOKTITLE = {Geometric topology ({A}thens, {GA}, 1993)},
    SERIES = {AMS/IP Stud. Adv. Math.},
    VOLUME = {2.1},
     PAGES = {80--103},
 PUBLISHER = {Amer. Math. Soc., Providence, RI},
      YEAR = {1997},
      ISBN = {0-8218-0654-8},
   MRCLASS = {57N10 (57M10)},
  MRNUMBER = {1470722},
MRREVIEWER = {Bruno\ P.\ Zimmermann},
       DOI = {10.1090/amsip/002.1/05},
       URL = {https://doi.org/10.1090/amsip/002.1/05},
}

@article {JPPoly,
 author = {Mutanguha, Jean Pierre},
journal = {{P}reprint},
  title = {On polynomial free-by-cyclic groups},
   year = {2024},
   note = {\href{https://arxiv.org/abs/2412.16150}{arxiv:2412.16150}}
}

@article {Macura,
    AUTHOR = {Macura, Nata\v{s}a},
     TITLE = {Detour functions and quasi-isometries},
   JOURNAL = {Q. J. Math.},
  FJOURNAL = {The Quarterly Journal of Mathematics},
    VOLUME = {53},
      YEAR = {2002},
    NUMBER = {2},
     PAGES = {207--239},
      ISSN = {0033-5606,1464-3847},
   MRCLASS = {20F65 (20F69)},
  MRNUMBER = {1909513},
MRREVIEWER = {Athanase\ Papadopoulos},
       DOI = {10.1093/qjmath/53.2.207},
       URL = {https://doi.org/10.1093/qjmath/53.2.207},
}

@article {wangwu,
    AUTHOR = {Wang, Shi Cheng and Wu, Ying Qing},
     TITLE = {Covering invariants and co-{H}opficity of {$3$}-manifold
              groups},
   JOURNAL = {Proc. London Math. Soc. (3)},
  FJOURNAL = {Proceedings of the London Mathematical Society. Third Series},
    VOLUME = {68},
      YEAR = {1994},
    NUMBER = {1},
     PAGES = {203--224},
      ISSN = {0024-6115,1460-244X},
   MRCLASS = {57M10},
  MRNUMBER = {1243842},
MRREVIEWER = {Bruno\ P.\ Zimmermann},
       DOI = {10.1112/plms/s3-68.1.203},
       URL = {https://doi.org/10.1112/plms/s3-68.1.203},
}
